\numberwithin{equation}{section}
\theoremstyle{plain}
\newtheorem{theorem}{Theorem}[section]
\newtheorem{lemma}[theorem]{Lemma}
\newtheorem{proposition}[theorem]{Proposition}
\newtheorem{corollary}[theorem]{Corollary}
\theoremstyle{remark}
\newtheorem*{remark}{Remark}
\theoremstyle{definition}
\newcommand{\R}{\mathbb{R}}
\newcommand{\N}{\mathbb{N}}
\def\eps{\epsilon}
\begin{document}

\title{On the existence of connecting orbits for critical values of the energy}
\author{{Giorgio
    Fusco,\footnote{Dipartimento di Matematica, Universit\`a dell'Aquila; e-mail:
      {\texttt{fusco@univaq.it}}}}\ \
Giovanni F. Gronchi,\footnote{Dipartimento di Matematica, Universit\`a
  di Pisa; e-mail:
  {\texttt{giovanni.federico.gronchi@unipi.it}}}\ \
Matteo Novaga\footnote{Dipartimento di Matematica, Universit\`a di Pisa; e-mail:
  {\texttt{matteo.novaga@unipi.it}}} }

%\date{}
\maketitle
%\noindent {\color{blue} blue = added or substituted text}\\
%\noindent{\color{red} red = \cancel{deleted text}}\\
\begin{abstract}
%We prove the existence of homoclinic, heteroclinic and
%  periodic orbits of Hamiltonian systems in the classical mechanics
%  case, where the trajectories connect points on the level sets of the
%  potential function $U$. Critical levels of the energy are also
%  admitted.
We consider an open connected set $\Omega$ and a smooth potential $U$
which is positive in $\Omega$ and vanishes on $\partial\Omega$. We
study the existence of orbits of the mechanical system
\[
\ddot{u}=U_x(u),
\]
that connect different components of $\partial\Omega$ and lie on the
zero level of the energy. We allow that $\partial\Omega$ contains a
finite number of critical points of $U$. The case of symmetric
potential is also considered.
\end{abstract}

% ==================================================================
\section{Introduction}
Let $U:\R^n\rightarrow\R$ be a
function of class $C^2$. We assume that $\Omega\subset\R^n$ is a
connected component of the set $\{x\in\R^n: U(x)>0\}$ and
that $\partial\Omega$ is compact and is the union of $N\geq 1$
distinct nonempty connected components $\Gamma_1,\ldots,\Gamma_N$. We
consider the following situations
\begin{description}
\item[H] $N\geq 2$ and, if $\Omega$ is unbounded, there is $r_0>0$ and
  a non-negative function $\sigma:[r_0,+\infty)\rightarrow\R$ such that
    $\int_{r_0}^{+\infty}\sigma(r)dr=+\infty$ and
  \begin{equation}
{\sqrt{U(x)}\geq\sigma(\vert x\vert),\;\; x\in\Omega,\;\;
  \vert x\vert\geq r_0.}
\label{sigmacond}
  \end{equation}
\item[H$_s$] $\Omega$ is bounded, the origin $0\in\R^n$ belongs to
  $\Omega$ and $U$ is invariant under the antipodal map
\[
U(-x)=U(x),\;\;x\in\Omega.
\]
\end{description}

%\begin{remark}
\noindent Condition (\ref{sigmacond}) was first introduced in
\cite{monteil}. A sufficient condition for (\ref{sigmacond})
is that $\liminf_{|x|\to\infty} U(x) >0$.
%\end{remark}

\smallbreak
We study non constant solutions $u:(T_-,T_+)\rightarrow\Omega$, of the
equation
\begin{equation}\label{Newton}
\ddot{u}=U_x(u),\ \ \ U_x=\Bigl(\frac{\partial U}{\partial x}\Bigr)^T,
\end{equation}
that satisfy
\begin{equation}\label{lim}
\lim_{t\rightarrow T_\pm}d(u(t),\partial\Omega)=0,
\end{equation}
with $d$ the Euclidean distance, and lie on the energy surface
\begin{equation}\label{EnergConserv}
\frac{1}{2}\vert\dot{u}\vert^2-U(u)=0.
\end{equation}
 We allow that the boundary $\partial\Omega$ of $\Omega$ contains a
 finite set $P$ of critical
 points of $U$ and assume
\begin{description}%\label{onGamma}
\item[H$_1$] If $\Gamma\in\{\Gamma_1,\ldots,\Gamma_N\}$ has positive
  diameter and $p\in P\cap\Gamma$ then $p$ is a hyperbolic critical
  point of $U$.
\end{description}
If $\Gamma$ has positive diameter, then hyperbolic critical points
$p\in\Gamma$ correspond to saddle-center equilibrium points in the
zero energy level of the Hamiltonian system associated to
(\ref{Newton}). These points are organizing centers of complex
dynamics, see \cite{salomao}.

\noindent Note that $\mathbf{H}_1$ does not exclude that some of the
$\Gamma_j$ reduce to a singleton, say $\{p\}$, for some $p\in P$.  In
this case nothing is required on the behavior of $U$ in a neighborhood
of $p$ aside from being $C^2$.

\noindent
A comment on $\mathbf{H}$ and $\mathbf{H}_s$ is in order. If $P$ is
nonempty $u\equiv p$ for $p\in P$ is a constant solution of
(\ref{Newton}) that satisfies (\ref{lim}) and (\ref{EnergConserv}). To
avoid trivial solutions of this kind we require $N\geq
  2$ in $\mathbf{H}$, and look for solutions that connect different
components of $\partial\Omega$. In $\mathbf{H}_s$ we do not exclude
that $\partial\Omega$ is connected ($N=1$) and avoid trivial solutions
by restricting to a symmetric context and to solutions that pass through
0.

\smallbreak
We prove the following results.
\begin{theorem}\label{main0}
Assume that $\mathbf{H}$ and $\mathbf{H}_1$ hold. Then for each
$\Gamma_-\in\{\Gamma_1,\ldots,\Gamma_N\}$ there exist
$\Gamma_+\in\{\Gamma_1,\ldots,\Gamma_N\}\setminus\{\Gamma_-\}$ and a
map $u^*:(T_-,T_+)\rightarrow\Omega$, with $-\infty\leq T_-<T_+\leq
+\infty$, that satisfies (\ref{Newton}), (\ref{EnergConserv}) and
\begin{equation}\label{d-0}
\lim_{t\rightarrow T_\pm}d(u^*(t),\Gamma_\pm)=0.
\end{equation}
Moreover, $T_->-\infty$ (resp. $T_+<+\infty$)  if and only if
$\Gamma_-$ (resp. $\Gamma_+$)
has positive diameter. If $T_->-\infty$ it results
\begin{equation}\label{x-}
\begin{split}
&\lim_{t\rightarrow T_-}u^*(t)= x_-,\\
&\lim_{t\rightarrow T_-}\dot{u}^*(t)=0,
\end{split}
\end{equation}
for some $x_-\in\Gamma_-\setminus P$. An analogous statement
holds if $T_+<+\infty$.
\end{theorem}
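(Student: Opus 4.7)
The plan is to apply the Maupertuis principle, which converts the search for a zero-energy orbit of (\ref{Newton}) into a geodesic problem for the degenerate Jacobi metric $ds=\sqrt{2U(x)}\,|dx|$. For each pair $i\neq j$, I would set
\[
d_{ij}=\inf\Bigl\{\JH(\gamma):=\int_0^1\sqrt{2U(\gamma(s))}\,|\dot\gamma(s)|\,ds\,:\,\gamma:[0,1]\to\overline{\Omega}\text{ absolutely continuous},\ \gamma(0)\in\Gamma_i,\ \gamma(1)\in\Gamma_j\Bigr\},
\]
and, given $\Gamma_-=\Gamma_i$, choose $\Gamma_+=\Gamma_j$ realising $\min_{k\neq i}d_{ik}$. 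The orbit $u^*$ will be produced by minimising $\JH$ between $\Gamma_-$ and $\Gamma_+$ and reparametrising the resulting curve $\gamma^*$ by physical time via $dt=|d\gamma|/\sqrt{2U(\gamma)}$; by the Euler--Lagrange equation for $\JH$ together with Maupertuis, the reparametrised curve solves (\ref{Newton}) on its maximal interval $(T_-,T_+)$ and automatically satisfies (\ref{EnergConserv}).

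Existence of $\gamma^*$ is obtained by the direct method. After parametrising a minimising sequence by Euclidean arclength, Ascoli--Arzel\`a yields a uniform limit provided the sequence is confined to a bounded region; this is where $\mathbf{H}$ enters, since (\ref{sigmacond}) together with $\int_{r_0}^{+\infty}\sigma(r)dr=+\infty$ forces any curve reaching Euclidean distance $R\gg r_0$ from the origin to have $\JH$-mass at least $\sqrt{2}\int_{r_0}^R\sigma(r)dr\to+\infty$, forbidding escape. Lower semicontinuity of $\JH$ under uniform convergence then produces a minimiser $\gamma^*$. A standard splitting argument next shows that $\gamma^*$ meets $\partial\Omega$ only at its endpoints and only on $\Gamma_i\cup\Gamma_j$: an interior crossing of a third $\Gamma_k$ would split $\gamma^*$ into subarcs of $\JH$-lengths at least $d_{ik}$ and $d_{kj}$, and the choice of $j$, combined with the strict positivity of $d_{km}$ for $k\neq m$ (a consequence of disjointness and compactness of the components), would contradict minimality.

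The most delicate step is the endpoint analysis, which I expect to be the main technical obstacle. For a singleton component $\Gamma_\pm=\{p\}$ the endpoint of $\gamma^*$ is forced to be $p$, and since $p$ is then an isolated zero of $U$ and hence a critical point, the local estimate $U(x)\lesssim|x-p|^2$ makes the time integrand $|\dot\gamma|/\sqrt{2U(\gamma)}$ non-integrable, so $T_\pm=\pm\infty$. When $\mathrm{diam}(\Gamma_\pm)>0$ the endpoint $x_\pm$ of $\gamma^*$ must be shown to lie in $\Gamma_\pm\setminus P$; here I would exploit the fact that $\JH\equiv 0$ on $\Gamma_\pm$, so a minimiser ending at a hyperbolic $p\in P\cap\Gamma_\pm$ could be replaced without increasing $\JH$ by one whose endpoint is slid along $\Gamma_\pm$ to a nearby regular point, and the stable-manifold structure at $p$ supplied by $\mathbf{H}_1$ then prevents a true minimising geodesic from terminating at $p$. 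Once $x_\pm\in\Gamma_\pm\setminus P$ is identified, $U$ vanishes linearly in the distance to $\Gamma_\pm$ at $x_\pm$, whence $\sqrt{U}\sim r^{1/2}$, the time integrand is integrable, $T_\pm$ is finite, and energy conservation $|\dot u^*|^2=2U(u^*)\to 0$ yields (\ref{x-}).
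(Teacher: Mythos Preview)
Your overall strategy---Maupertuis principle and minimisation of the Jacobi length---is close to the paper's (which minimises the action with free time interval and appeals to the Jacobi functional at the decisive moments), and your treatment of boundedness via $\mathbf{H}$ and of the exclusion of third boundary components is correct in outline. The genuine gap is the endpoint analysis when $\mathrm{diam}(\Gamma_\pm)>0$. Your sliding argument does not yield a contradiction: appending to $\gamma^*$ an arc of $\Gamma_\pm$ from $p$ to a nearby regular point produces a curve with the \emph{same} Jacobi length, not a strictly smaller one, and that curve has its final arc lying on $\partial\Omega$, so it cannot be reparametrised by physical time into a solution of (\ref{Newton}). Thus you neither exclude the possibility that $\gamma^*$ terminates at $p$, nor exhibit an alternative minimiser lying in $\Omega$ and ending at a regular point. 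The appeal to ``the stable-manifold structure at $p$'' is an intuition, not a mechanism.

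The missing idea---which is the technical core of the paper, Lemma~\ref{T<}---is to show that a curve approaching $p$ can be \emph{strictly} undercut in Jacobi length by a competitor ending on $\Gamma_\pm\setminus P$. Hyperbolicity ($\mathbf{H}_1$) is used concretely: since $\Gamma_\pm$ has positive diameter, $U_{xx}(p)$ has at least one negative eigenvalue, and bending the trajectory into the corresponding eigendirection $e_1$ lowers $U$ and allows one to reach $\{U=0\}$ transversally at a nearby regular boundary point. The paper makes this precise by first treating the quadratic model $V$ of $U$ at $p$, constructing an explicit competitor $\bar v=\bar u+g\,e_1$ via an ODE for $g$ (equations (\ref{gEquat})--(\ref{gEquat1})) for which $\mathcal{J}_V(\bar v,[1,s_1])<\mathcal{J}_V(\bar u,[0,s_1])$, and then transferring the strict inequality to $U$ by a scaling argument near $p$. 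Without this construction the conclusion that $T_\pm$ is finite when $\mathrm{diam}(\Gamma_\pm)>0$ does not follow. (A secondary gap: you also need the analogue of Lemma~\ref{Convp}, ensuring the minimiser actually reaches a definite endpoint on $\partial\Omega$ rather than accumulating; this is not automatic from Ascoli--Arzel\`a alone, since bounded Jacobi length does not bound Euclidean length near $\partial\Omega$.)
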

\begin{theorem}\label{main-0}
Assume that $\mathbf{H}_s$ and $\mathbf{H}_1$ hold. Then there exist
$\Gamma_+\in\{\Gamma_1,\ldots,\Gamma_N\}$ and a map
$u^*:(0,T_+)\rightarrow\Omega$, with $0<T_+\leq +\infty$, that satisfies
(\ref{Newton}), (\ref{EnergConserv}) and
\[
\lim_{t\rightarrow T_+}d(u^*(t),\Gamma_+)=0.
\]
Moreover, $T_+<+\infty$ if and only if $\Gamma_+$ has positive diameter. If
$T_+<+\infty$ it results
\[
\begin{split}
&\lim_{t\rightarrow T_+}u^*(t)=x_+,\\
&\lim_{t\rightarrow T_+}\dot{u}^*(t)=0,
\end{split}
\]
for some $x_+\in\Gamma_+\setminus P$.
\end{theorem}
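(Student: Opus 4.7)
The plan is to mirror the variational construction underlying Theorem \ref{main0}, exploiting the antipodal symmetry of $\mathbf{H}_s$ to fix the starting point at the origin and build only a half-orbit. Recall that stationary curves of the Jacobi--Maupertuis length
\[
\JH(\gamma)=\int_\gamma\sqrt{2U(x)}\,d\ell,
\]
once reparametrized by the energy relation $\tfrac{1}{2}|\dot u|^2=U(u)$, solve (\ref{Newton}) on the zero energy level. Under $\mathbf{H}_s$ we have $U(0)>0$ and, differentiating $U(-x)=U(x)$ at the origin, $U_x(0)=0$, so a half-orbit issued from $0$ with speed $\sqrt{2U(0)}>0$ extends via $t\mapsto -u^*(-t)$ for $t<0$ to a genuine $C^2$ solution across $t=0$ passing through the origin.

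The first step is to set, for each $j$,
\[
d_H(0,\Gamma_j)=\inf\bigl\{\JH(\gamma):\gamma\text{ is an absolutely continuous curve in }\overline\Omega\text{ joining }0\text{ to }\Gamma_j\bigr\},
\]
choose $\Gamma_+$ attaining $\min_j d_H(0,\Gamma_j)$, and produce a minimizing curve $\gamma^*$ from $0$ to some $x_+\in\Gamma_+$ by the direct method, using lower semicontinuity of $\JH$ under uniform convergence of arclength parametrizations and compactness of $\overline\Omega$. The choice of $\Gamma_+$ forces the interior of $\gamma^*$ to lie in $\Omega$: any earlier intersection with $\partial\Omega$ would contradict either the minimality of $\gamma^*$ (if it occurred on $\Gamma_+$) or the choice of $\Gamma_+$ (if on another $\Gamma_k$). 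Along the interior $U>0$, so reparametrization by time yields $u^*\colon(0,T_+)\to\Omega$ satisfying (\ref{Newton}) and (\ref{EnergConserv}), with $u^*(t)\to 0$ as $t\to 0^+$ and $u^*(t)\to x_+$ as $t\to T_+$.

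Identifying the endpoint behavior is the delicate step. If $\Gamma_+=\{p\}\subset P$ is a singleton critical point, then $U(x)=O(|x-p|^2)$ near $p$ (being a nonnegative $C^2$ function vanishing at $p$), so energy conservation forces $\int d\ell/\sqrt{2U}=+\infty$ near $x_+$ and $T_+=+\infty$. If instead $\Gamma_+$ has positive diameter, the key subcase to rule out is $x_+\in P$: by $\mathbf{H}_1$ such a point is a hyperbolic saddle, and a perturbation moving the endpoint along $\Gamma_+$ a small distance away from $x_+$ should strictly decrease $\JH$, contradicting minimality. This is precisely the type of local analysis near hyperbolic saddles that must have been carried out to handle boundary critical points in the proof of Theorem \ref{main0}, and can be imported directly. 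Once $x_+\notin P$ is established, transversality $\nabla U(x_+)\neq 0$ yields finite arrival time $T_+<+\infty$ together with $|\dot u^*(t)|^2=2U(u^*(t))\to 0$.

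The main obstacle is the exclusion just described: proving that a $\JH$-minimizer joining $0$ to a component of positive diameter cannot terminate at a hyperbolic critical point. The antipodal symmetry does not help with this step; it merely replaces the two-component connection problem by a half-orbit problem anchored at $0$, which is why the technical heart of the argument is inherited from the non-symmetric construction. The remaining ingredients -- reparametrization near the interior critical point $0\in\Omega$ (which is harmless since $U(0)>0$ keeps the time parametrization smooth), lower semicontinuity of $\JH$, and the characterization of finiteness of $T_+$ via the diameter of $\Gamma_+$ -- are essentially bookkeeping once the saddle analysis is in hand.
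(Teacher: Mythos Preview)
Your strategy is a legitimate alternative route: you minimize the Jacobi length $\mathcal{J}_U$ over curves from $0$ to $\partial\Omega$, whereas the paper minimizes the action $\mathcal{A}$ over the half-orbit class \eqref{SymmU} and then reruns Lemmas \ref{timelemma}--\ref{T<} with $T_-=0$ fixed. By Lemma \ref{JacobiL} the two infima agree, and you correctly single out the local shortening argument at a hyperbolic saddle (Lemma \ref{T<}, carried out via the stable/center manifold structure of the Hamiltonian flow) as the technical heart that must be imported in either framework.

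The gap in your proposal is the compactness step for the direct method. A bound on $\mathcal{J}_U(\gamma)=\int_\gamma\sqrt{2U}\,d\ell$ does \emph{not} bound the Euclidean arclength, because $\sqrt{U}$ degenerates on $\partial\Omega$: a minimizing sequence can oscillate with unbounded length in a neighborhood of the boundary at negligible Jacobi cost, so ``arclength parametrizations and compactness of $\overline\Omega$'' do not by themselves produce a limiting curve, let alone one with a well-defined endpoint $x_+$. The metric references (e.g.\ \cite{monteil}) address this with additional arguments; the paper sidesteps it entirely by working with $\mathcal{A}$, where the bound $\mathcal{A}(u_j)\le a$ controls $\int|\dot u_j|^2$ directly and yields the H\"older estimate \eqref{Holder} together with weak $W^{1,2}$ compactness. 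The price of that choice is that the limit $u^*$ is a priori only known to satisfy $d(u^*(t),\Gamma_+)\to 0$ when $T_+=+\infty$, and a separate argument (Lemma \ref{Convp}) is needed to upgrade this to convergence to a single point before the saddle exclusion can be invoked---this is part of what you dismiss as ``bookkeeping'' but is in fact a substantive step.
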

We list a few straightforward consequences of Theorems \ref{main0} and
\ref{main-0}.

\begin{corollary}\label{Hetero}
  Theorem \ref{main0} implies that, if $\partial\Omega=P$, given $p_-\in
P$ there is $p_+\in P\setminus\{p_-\}$ and a heteroclinic connection
between $p_-$ and $p_+$, that is a solution $u^*:\R\rightarrow\R^n$ of
(\ref{Newton}) and (\ref{EnergConserv}) that satisfies
\[
\lim_{t\rightarrow\pm\infty}u^*(t)=p_\pm.
\]
\end{corollary}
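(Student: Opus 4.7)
The plan is to observe that the corollary is an essentially immediate specialization of Theorem \ref{main0}, so the work reduces to checking that the hypotheses of that theorem apply and that its conclusions give heteroclinics in this setting. I will organize the argument in three short steps.

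First, I would analyze the topology of $\partial\Omega$ under the assumption $\partial\Omega=P$. Since $P$ is assumed to be a \emph{finite} subset of $\R^n$, it is a finite discrete set, so its connected components are exactly its singletons. In particular, each $\Gamma_j$ has the form $\{p_j\}$ for some $p_j\in P$, and consequently every $\Gamma_j$ has diameter zero. Note also that $\mathbf{H}_1$ is automatically satisfied in a vacuous sense on those $\Gamma_j$ that have positive diameter (there are none), while $\mathbf{H}$ is assumed as the ambient hypothesis of Theorem \ref{main0}.

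Second, given $p_-\in P$, let $\Gamma_-=\{p_-\}$. Apply Theorem \ref{main0} to this choice of $\Gamma_-$ to obtain some $\Gamma_+\in\{\Gamma_1,\dots,\Gamma_N\}\setminus\{\Gamma_-\}$ and a solution $u^*:(T_-,T_+)\to\Omega$ of \eqref{Newton} and \eqref{EnergConserv} satisfying \eqref{d-0}. Writing $\Gamma_+=\{p_+\}$, we have $p_+\in P\setminus\{p_-\}$, as required.

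Finally, I would invoke the ``if and only if'' clause of Theorem \ref{main0}: since $\Gamma_-$ and $\Gamma_+$ both have diameter zero, it follows that $T_-=-\infty$ and $T_+=+\infty$. Hence $u^*$ is defined on all of $\R$, and \eqref{d-0} with $\Gamma_\pm=\{p_\pm\}$ reduces to $\lim_{t\to\pm\infty}u^*(t)=p_\pm$, which is the desired heteroclinic connection. There is no real obstacle here beyond correctly translating the ``positive diameter'' dichotomy into the singleton case; the entire content of the corollary is packaged in Theorem \ref{main0}.
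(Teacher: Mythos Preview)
Your proposal is correct and matches the paper's approach exactly: the paper in fact gives no explicit proof of this corollary, treating it as an immediate consequence of Theorem~\ref{main0}, and your three steps spell out precisely the implicit argument (singleton components, application of the theorem, and the diameter-zero dichotomy forcing $T_\pm=\pm\infty$).
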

The problem of the existence of heteroclinic connections between two
isolated zeros $p_\pm$ of a non-negative potential has been recently
reconsidered by several authors. In \cite{af} existence was established
under a mild monotonicity condition on $U$ near $p_\pm$. This
condition was removed in \cite{sourdis}, see also \cite{A}. The most
general results, equivalent to the consequence of Theorem \ref{main0}
discussed in Section~\ref{s:heteroclinic}, were recently obtained in \cite{monteil} and in
\cite{ZS}, see also \cite{braides}.
All these papers establish existence by a variational
approach. In \cite{af}, \cite{sourdis} and \cite{A} by minimizing the
action functional, and in \cite{monteil} and
\cite{ZS} by minimizing the Jacobi functional.

\begin{corollary}\label{Homo}
  Theorem \ref{main0} implies that,
    if $\Gamma_-=\{p\}$ for some $p\in P$ and the elements of
  $\{\Gamma_1,\ldots,\Gamma_N\}\setminus\{\Gamma_-\}$ have all positive
diameter, there exists a nontrivial orbit homoclinic to $p$ that satisfies (\ref{Newton}), (\ref{EnergConserv}).
\end{corollary}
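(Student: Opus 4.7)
The plan is to apply Theorem \ref{main0} with $\Gamma_- = \{p\}$ and then double the resulting trajectory by reflecting it in time at its terminal endpoint on $\Gamma_+$. First I would observe that since $\Gamma_-$ has zero diameter, Theorem \ref{main0} produces $T_- = -\infty$, so the resulting orbit $u^*$ tends to $p$ as $t \to -\infty$. Since every component $\Gamma_j \neq \Gamma_-$ has positive diameter by hypothesis, whichever $\Gamma_+$ the theorem selects must have positive diameter, so $T_+ < +\infty$, and by the analogue of (\ref{x-}) at the right endpoint there exists $x_+ \in \Gamma_+\setminus P$ with $u^*(T_+) = x_+$ and $\dot u^*(T_+) = 0$.

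Next I would define $\hat u(t) = u^*(t)$ for $t \leq T_+$ and $\hat u(t) = u^*(2T_+ - t)$ for $t \geq T_+$. Continuity at $T_+$ is immediate, and $\dot{\hat u}(T_+^+) = -\dot u^*(T_+) = 0 = \dot{\hat u}(T_+^-)$, giving a $C^1$ match. Because (\ref{Newton}) is autonomous and invariant under the reversal $t \mapsto 2T_+ - t$, the reflected branch is again a solution, and the left and right second derivatives at $T_+$ both equal $U_x(x_+)$, so $\hat u$ is $C^2$ and solves (\ref{Newton}) on all of $\R$. The energy $\tfrac12|\dot u|^2 - U(u)$ is invariant under the reversal as well, so $\hat u$ remains on the zero energy level.

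Finally, as $t \to +\infty$ the argument $2T_+ - t \to -\infty$, so $\hat u(t) \to p$; combined with $\hat u(t) \to p$ as $t \to -\infty$ this yields a homoclinic to $p$, and nontriviality follows from $\hat u(T_+) = x_+ \neq p$. The only subtle point is the $C^2$ matching at $T_+$: it relies on $x_+ \notin P$, so that $U_x(x_+) \neq 0$ provides a well-defined nonzero acceleration (which in fact points into $\Omega$, since $U$ vanishes on $\partial\Omega$ and is positive in $\Omega$). Uniqueness for the Cauchy problem of (\ref{Newton}) with data $(x_+, 0)$ then forces the continuation beyond $T_+$ to coincide with $u^*(2T_+ - t)$, and no further variational or compactness argument is needed beyond what Theorem \ref{main0} already provides.
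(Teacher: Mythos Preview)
Your proposal is correct and follows essentially the same approach as the paper: apply Theorem~\ref{main0} to obtain $u^*:(-\infty,T_+)\to\Omega$ with $u^*(T_+)=x_+\in\Gamma_+\setminus P$ and $\dot u^*(T_+)=0$, then extend by the time reflection $t\mapsto 2T_+-t$ to produce the homoclinic. The paper's proof is more terse, simply asserting smoothness of the extension, whereas you spell out the $C^2$ matching and invoke uniqueness for the Cauchy problem at $(x_+,0)$; this added detail is sound and in fact clarifies why $x_+\notin P$ is needed.
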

\begin{proof}
Let $v^*:\R\rightarrow\Omega\cup\{x_+\}$  be the extension
defined by
\[
v^*(T_++t)=u^*(T_+-t),\;\;t\in(0,+\infty),\;\;v^*(T_+)=x_+,
\]
of the solution $u^*:(-\infty,T_+)\rightarrow\Omega$ given by
Theorem~\ref{main0}.
The map $v^*$ so defined is a smooth non-constant solution of
(\ref{Newton}) that satisfies
\[
\lim_{t\rightarrow\pm\infty}v^*(t)=p.
\]
\end{proof}
\begin{corollary}\label{Per}
Theorem \ref{main0} implies that, if all the sets
$\Gamma_1,\ldots,\Gamma_N$ have positive diameter, given
$\Gamma_-\in\{\Gamma_1,\ldots,\Gamma_N\}$, there exist
$\Gamma_+\in\{\Gamma_1,\ldots,\Gamma_N\}\setminus\{\Gamma_-\}$ and a
periodic solution $v^*:\R\rightarrow\Omega$ of (\ref{Newton}) and
(\ref{EnergConserv}) that oscillates between $\Gamma_-$ and
$\Gamma_+$. This solution has period $T=2(T_+-T_-)$.
\end{corollary}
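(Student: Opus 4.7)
The plan is to build $v^*$ by reflecting the orbit produced by Theorem~\ref{main0} about each of its two turning points and then extending periodically, exploiting the time-reversibility of (\ref{Newton}). Since every $\Gamma_j$ has positive diameter, Theorem~\ref{main0} delivers $\Gamma_+\neq\Gamma_-$ and a solution $u^*:(T_-,T_+)\to\Omega$ with $-\infty<T_-<T_+<+\infty$, endpoint limits $u^*(t)\to x_\pm\in\Gamma_\pm\setminus P$, and $\dot u^*(t)\to 0$ as $t\to T_\pm$.

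First I would verify that $u^*$ admits a $C^2$ extension to the closed interval $[T_-,T_+]$. Setting $u^*(T_\pm):=x_\pm$ and $\dot u^*(T_\pm):=0$, the equation (\ref{Newton}) together with $U\in C^2$ forces $\ddot u^*(t)\to U_x(x_\pm)$ as $t\to T_\pm$; the extended map thus satisfies both (\ref{Newton}) and (\ref{EnergConserv}) on $[T_-,T_+]$.

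Next I would set $T:=2(T_+-T_-)$ and define $v^*$ on one fundamental period $[T_-,T_-+T]$ by $v^*(t):=u^*(t)$ for $t\in[T_-,T_+]$ and $v^*(t):=u^*(2T_+-t)$ for $t\in[T_+,2T_+-T_-]$, and then extend $T$-periodically to $\R$. Time reversibility of (\ref{Newton}) ensures that the reflected piece also solves the equation with zero energy. Smoothness at the gluing instants $T_+$ and $T_-+T$ follows because $\dot v^*$ vanishes from both sides (yielding $C^1$) while the two one-sided second derivatives both equal $U_x(x_\pm)$ (yielding $C^2$). By construction $v^*$ oscillates between the chosen endpoints on $\Gamma_-$ and $\Gamma_+$.

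The only delicate point — and the main obstacle were it not already handled by Theorem~\ref{main0} — is that the reflection argument relies on $x_\pm$ being regular points of $U$, so that the orbit performs a genuine smooth bounce on $\partial\Omega$ rather than asymptotically approaching a saddle in infinite time. This is precisely guaranteed by the conclusion $x_\pm\in\Gamma_\pm\setminus P$ together with the characterization $T_\pm\in\R$ if and only if the corresponding $\Gamma_\pm$ has positive diameter, both provided by Theorem~\ref{main0}.
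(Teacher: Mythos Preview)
Your proof is correct and follows essentially the same reflection-and-periodic-extension construction as the paper; the paper simply writes down the formula $w^*(T_++t)=u^*(T_+-t)$ on $(0,T_+-T_-]$ and declares the $T$-periodic extension, while you additionally spell out the $C^2$ gluing at the turning points and the role of $x_\pm\notin P$. There is no substantive difference in approach.
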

\begin{proof}
The solution $v^*$ is the $T$-periodic extension of the map
$w^*:[T_-,2T_+-T_-]\rightarrow \Omega$ defined by $w^*(t)=u^*(t)$ for
$t\in(T_-,T_+)$, where $u^*$ is given by Theorem \ref{main0}, and
\[
\begin{split}
&w^*(T_\pm)=x_\pm,\\
&w^*(T_++t)=u^*(T_+-t),\quad t\in (0,T_+-T_-].
\end{split}
\]
\end{proof}

The problem of existence of heteroclinic, homoclinic and periodic
solutions of (\ref{Newton}), in a context similar to the one considered
here, was already discussed in \cite{A} where $\partial\Omega$ is
allowed to include continua of critical points. Our result concerning
periodic solutions extends a corresponding result in \cite{A} where
existence was established under the assumption that $P=\emptyset$.

The following result is a direct consequence of Theorem \ref{main-0}.
\begin{corollary}\label{caseN-1}
Theorem \ref{main-0} implies that, if all the sets $\Gamma_1,\ldots,\Gamma_N$ have positive diameter,
  there exists $\Gamma_+\in\{\Gamma_1,\ldots,\Gamma_N\}$ and a
  periodic solution $v^*:\R\rightarrow\Omega$ of (\ref{Newton}) and
  (\ref{EnergConserv}) that satisfies
  \[
v^*(-t) = -v^*(t), \quad t\in\R. 
  \]
  This solution has period
  $T=4T_+$, with $T_+$.
\end{corollary}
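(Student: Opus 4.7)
The plan is to construct $v^*$ from the solution $u^*:(0,T_+)\to\Omega$ of Theorem~\ref{main-0} by combining the antipodal invariance of $U$ (given by $\mathbf{H}_s$) with the time-reversibility of (\ref{Newton}) at zero-velocity points. Since every $\Gamma_j$ has positive diameter, Theorem~\ref{main-0} provides $T_+<+\infty$, a limit point $x_+\in\Gamma_+\setminus P$ with $\dot{u}^*(T_+)=0$, and (by the symmetric setup that forces $u^*$ through the origin) $u^*(0)=0$ with $|\dot u^*(0)|=\sqrt{2U(0)}>0$.

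First I would extend $u^*$ to $[-T_+,T_+]$ by the odd reflection
\[
w^*(t)=\begin{cases} -u^*(-t), & t\in[-T_+,0],\\ u^*(t), & t\in[0,T_+]. \end{cases}
\]
Because $U(-x)=U(x)$, the map $t\mapsto -u^*(-t)$ solves (\ref{Newton}) and (\ref{EnergConserv}); the two branches share both position and velocity at $t=0$, so $w^*$ is a smooth solution on $(-T_+,T_+)$ satisfying $w^*(\pm T_+)=\pm x_+$, $\dot{w}^*(\pm T_+)=0$, and the pointwise identity $w^*(-t)=-w^*(t)$.

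Next I would extend to $[-T_+,3T_+]$ by even reflection about $T_+$, setting $w^*(t)=w^*(2T_+-t)$ for $t\in[T_+,3T_+]$. Since $x_+\notin P$ (and $-x_+\notin P$ by the antipodal symmetry $U_x(-x)=-U_x(x)$), Cauchy--Lipschitz uniqueness applied at the vanishing-velocity instant $t=T_+$ shows that the reflected branch is the unique forward continuation of $w^*$, so (\ref{Newton}) and (\ref{EnergConserv}) persist on the enlarged interval, with $w^*(3T_+)=w^*(-T_+)=-x_+$ and $\dot{w}^*(3T_+)=0$. Because position and velocity match at $t=-T_+$ and $t=3T_+$, a further appeal to ODE uniqueness shows that the $4T_+$-periodic extension $v^*$ of $w^*$ is a smooth global solution of (\ref{Newton}) and (\ref{EnergConserv}).

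To verify the antipodal identity $v^*(-t)=-v^*(t)$: it holds on $[-T_+,T_+]$ by construction, while for $t\in[T_+,3T_+]$ the combination of periodicity, the reflection formula, and the odd symmetry of $w^*$ gives
\[
v^*(-t)=v^*(4T_+-t)=w^*\bigl(2T_+-(4T_+-t)\bigr)=w^*(t-2T_+)=-w^*(2T_+-t)=-v^*(t),
\]
and the identity propagates to all of $\R$ by $4T_+$-periodicity. The only genuinely delicate point in the scheme is the smooth gluing at $t=\pm T_+$ and at the period identification $t=3T_+\sim -T_+$; this reduces to matching of position and velocity at these points, after which the equation itself supplies the higher-order regularity via bootstrapping, so I do not expect this to be a real obstacle.
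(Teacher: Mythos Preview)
Your argument is correct and follows essentially the same route as the paper: odd extension of $u^*$ through $t=0$ using the antipodal symmetry of $U$, even reflection at the zero-velocity endpoint $T_+$, and $4T_+$-periodic extension. The paper builds $w^*$ on the symmetric window $[-2T_+,2T_+]$ via reflections at both $\pm T_+$, whereas you work on $[-T_+,3T_+]$ with a single reflection and then periodize; the two constructions yield the same function, and you supply the ODE-uniqueness justification at the gluing points and the explicit check of $v^*(-t)=-v^*(t)$ that the paper leaves to the reader.
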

\begin{proof}
The solution $v^*$ is the $T$-periodic extension of the map
$w^*:[-2T_+,2T_+]\rightarrow \Omega$ defined by $w^*(t) = u^*(t)$ for
$t\in(0,T_+)$, where $u^*$ is given by Theorem \ref{main-0}, and by
\[
\begin{split}
  &w^*(t) = -w^*(-t),\hskip 2cm t\in (-T_+,0),\\
  &w^*(0) = 0, \quad w^*(\pm T_+) = \pm x_+, \\
  &w^*(T_++t) = w^*(T_+-t), \hskip 0.8cm t\in(0,T_+],\\
  &w^*(-T_++t) = w^*(-T_+-t), \hskip 0.3cm t\in[-T_+,0).\\
\end{split}
\]
In particular the solution oscillates between $x_+$ and $-x_+$ and
this is true also when $\partial\Omega$ is connected ($N=1$).
\end{proof}

% ==================================================================
\section{Proof of Theorems \ref{main0} and \ref{main-0}}

We recall a classical result.
\begin{lemma}\label{JacobiL}
Let $G:\R^n\rightarrow\R$  be a smooth bounded and
non-negative potential, $I=(a,b)$ a bounded interval.
%and $q:I\rightarrow\R^n$ a smooth map that satisfy $G(q(t))>0$, for $t\in
%I$.
Define the Jacobi functional
\[
\mathcal{J}_G(q, I)=\sqrt{2}\int_I\sqrt{G(q(t))}\vert\dot{q}(t)\vert dt
\]
and the action functional
\[
\mathcal{A}_G(q,I) =
\int_I\Big(\frac{1}{2}\vert\dot{q}(t)\vert^2+G(q(t))\Big)dt.
\]
Then
\begin{enumerate}
\item
\[
\mathcal{J}_G(q, I)
\leq
\mathcal{A}_G(q,I),
%\int_I\Big(\frac{1}{2}\vert\dot{q}(t)\vert^2+G(q(t))\Big)dt,
\quad q\in W^{1,2}(I;\R^n)
\]
 with equality sign if and only if
\[
\frac{1}{2}\vert\dot{q}(t)\vert^2-G(q(t))=0,\;t\in I.
\]
\item
\[
\min_{q\in\mathcal{Q}}\mathcal{J}_G(q, I)=\min_{q\in\mathcal{Q}}\mathcal{A}_G(q,I),
%\int_I\Big(\frac{1}{2}\vert\dot{q}(t)\vert^2+G(q(t))\Big)dt,
\]
where
\[
\mathcal{Q}=\{q\in W^{1,2}(I;\R^n):q(a)=q_a, q(b)=q_b\}.
\]
\end{enumerate}
\end{lemma}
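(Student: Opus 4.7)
The plan is to derive both parts from the pointwise arithmetic--geometric mean inequality $2\sqrt{AB}\le A+B$ for $A,B\ge 0$, which becomes an equality exactly when $A=B$.

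For part (i), I would set $A(t)=\tfrac{1}{2}|\dot q(t)|^2$ and $B(t)=G(q(t))$, so that $\sqrt{2}\sqrt{G(q(t))}\,|\dot q(t)|=2\sqrt{A(t)B(t)}\le A(t)+B(t)$ a.e.\ on $I$. Integrating over $I$ yields $\mathcal{J}_G(q,I)\le\mathcal{A}_G(q,I)$, and the equality case of AM--GM, applied pointwise, reads exactly as the energy-conservation identity $\tfrac{1}{2}|\dot q|^2=G(q)$ a.e.\ on $I$.

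For part (ii), one inequality, $\min\mathcal{J}_G\le\min\mathcal{A}_G$, is immediate from (i). The reverse direction is the classical Maupertuis--Jacobi trick: given any admissible $q$, reparametrize in time so that the energy-conservation identity holds, turning the pointwise AM--GM bound into an equality. Concretely, writing $\gamma(s)$ for the arc-length parametrization of the image of $q$, the new time variable $t(s)$ is determined by $dt/ds=1/\sqrt{2G(\gamma(s))}$, and the reparametrized curve $\tilde q$ satisfies $\mathcal{A}_G(\tilde q)=\mathcal{J}_G(\tilde q)=\mathcal{J}_G(q,I)$, the last equality by reparametrization invariance of $\mathcal{J}_G$. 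Applying this construction to a minimizing sequence for $\mathcal{J}_G$ in $\mathcal{Q}$ yields paths whose actions converge to $\min\mathcal{J}_G$, establishing the reverse inequality.

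The main obstacle I anticipate is an interval-length mismatch: the energy-conserving reparametrization naturally lives on an interval of length $\int_0^L ds/\sqrt{2G(\gamma(s))}$, which need not equal $b-a$, and the formula breaks down near points where $G(\gamma(s))$ vanishes. To land in $\mathcal{Q}$ I would therefore need an auxiliary linear time rescaling (adjusting the kinetic/potential balance by a controlled factor) together with a regularization step, for instance replacing $G$ by $G+\epsilon$ in the reparametrization formula and passing to the limit $\epsilon\downarrow 0$. This interval-matching and regularization is the only technical subtlety; once it is handled, part (ii) follows directly from part (i).
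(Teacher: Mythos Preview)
The paper does not give a proof of this lemma; it simply introduces it with ``We recall a classical result.'' So there is no paper proof to compare against, and your approach to part~(i) via the pointwise inequality $2\sqrt{AB}\le A+B$ is exactly the standard one and is correct.

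For part~(ii), however, the obstacle you flag is not a mere technicality that a linear rescaling plus regularization will absorb: as the lemma is literally stated, with the interval $I=(a,b)$ \emph{fixed}, the assertion is false. Take $G\equiv 1$. Then $\min_{\mathcal Q}\mathcal J_G=\sqrt{2}\,|q_b-q_a|$ (straight segment), while $\min_{\mathcal Q}\mathcal A_G=\dfrac{|q_b-q_a|^2}{2(b-a)}+(b-a)$ (straight segment at constant speed), and these coincide only when $b-a=|q_b-q_a|/\sqrt{2}$. Your proposed fix fails for a concrete reason: if $\hat q$ is the energy-conserving reparametrization on $[0,T]$ and you linearly rescale to $[a,b]$ by $\tilde q(t)=\hat q\bigl(T(t-a)/(b-a)\bigr)$, a direct computation gives
\[
\mathcal A_G(\tilde q,I)=\tfrac12\bigl(\mu+\mu^{-1}\bigr)\,\mathcal J_G(q,I),\qquad \mu=\frac{T}{\,b-a\,},
\]
which is $\ge \mathcal J_G(q,I)$ with equality only if $\mu=1$. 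So the rescaling step produces a competitor whose action overshoots by the factor $\tfrac12(\mu+\mu^{-1})$, and along a minimizing sequence for $\mathcal J_G$ there is no mechanism forcing $\mu_n\to 1$.

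The version of (ii) that is actually true---and the one the paper tacitly uses throughout, since its admissible set $\mathcal U$ allows $T_\pm^u$ to vary---is the Maupertuis principle with a \emph{free} time interval: $\inf_{I,\,q\in\mathcal Q_I}\mathcal A_G=\inf_{q}\mathcal J_G$. In that setting your reparametrization argument goes through without any rescaling, and the $G+\epsilon$ regularization handles the points where $G$ vanishes. So your plan is correct once you drop the attempt to force the competitor back onto the original interval and instead let the interval be part of the minimization.
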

\noindent When $G=U$ we shall simply write $\mathcal{J}, \mathcal{A}$ for
$\mathcal{J}_U, \mathcal{A}_U$.

\smallbreak We now start the proof of Theorem~\ref{main0}. Choose
$\Gamma_-\in\{\Gamma_1,\ldots,\Gamma_N\}$ and set
\[
d=\min\{\vert x-y\vert: x\in\Gamma_-, y\in\partial\Omega\setminus\Gamma_-\}.
\]
For small $\delta\in(0,d)$ let $O_\delta=\{x\in\Omega:
d(x,\Gamma_-)<\delta\}$ and let $U_0=\frac{1}{2}\min_{x\in\partial
  O_\delta\cap\Omega}U(x)$. We note that $U_0>0$ and define the
admissible set
\begin{equation}\label{Ad}
\begin{split}
\mathcal{U}=\bigl\{u\in W^{1,2}((T_-^u,T_+^u);\R^n):
        {-\infty< T_-^u < T_+^u< +\infty,}\hskip 2.2cm\\
        u((T_-^u,T_+^u))\subset\Omega,\ U(u(0))=U_0,\
        u(T_-^u)\in\Gamma_-,\
        u(T_+^u)\in\partial\Omega\setminus\Gamma_-\bigr\}.
\end{split}
\end{equation}
We determine the map $u^*$ in Theorem \ref{main0} as the limit of a
minimizing sequence $\{u_j\}\subset\mathcal{U}$ of the action
functional
\[
\mathcal{A}(u,(T_-^u,T_+^u)) =
\int_{T_-^u}^{T_+^u}\Big(\frac{1}{2}\vert\dot{u}(t)\vert^2+U(u(t))\Big)dt,
\]

%\begin{remark}
\noindent Note that in the definition of $\mathcal{U}$ the times
$T_-^u$ and $T_+^u$ are not fixed but, in general,
change with $u$. Note also that the condition
$U(u(0))=U_0$ in (\ref{Ad}) is
%not a constraint but
a normalization which can always be imposed by a translation of time
and has the scope of eliminating the loss of compactness due to
translation invariance.
%\end{remark}
Let $\bar{x}_-\in\Gamma_-$ and
$\bar{x}_+\in\partial\Omega\setminus\Gamma_-$ be such that $\vert
\bar{x}_+-\bar{x}_-\vert=d$ and set
\[
\tilde{u}(t)=(1-(t+\tau))\bar{x}_-+(t+\tau)\bar{x}_+,\;\;t\in[-\tau,1-\tau],
\]
where $\tau\in(0,1)$ is chosen so that $U(\tilde{u}(0))=U_0$. Then
$\tilde{u}\in\mathcal{U}$, $T_-^{\tilde{u}}=-\tau$,
$T_+^{\tilde{u}}=1-\tau$ and
\[
\mathcal{A}(\tilde{u},(-\tau,1-\tau))=a<+\infty.
\]
Next we show that there are constants $M>0$ and $T_0>0$ such that
each $u\in\mathcal{U}$ with
\begin{equation}\label{Bound-E}
  \mathcal{A}(u,(T_-^u,T_+^u))\leq a,
\end{equation}
satisfies
\begin{equation}\label{Bound}
\begin{split}
&\|u\|_{L^\infty((T_-^u,T_+^u);\R^n)}\leq M,\\
& T_-^u\leq-T_0<T_0\leq T_+^u.
\end{split}
\end{equation}

\noindent
The $L^\infty$ bound on $u$ follows from $\mathbf{H}$ and from
Lemma~\ref{JacobiL}, in fact, if $\Omega$ is unbounded, $\vert
u(\bar{t})\vert=M$ for some $\bar{t}\in(T_-^u,T_+^u)$ implies
\[
a\geq\mathcal{A}(u,(T_-^u,\bar{t})) \geq
\int_{T_-^u}^{\bar{t}}\sqrt{2U(u(t))}\vert\dot{u}(t)\vert
dt\geq\sqrt{2}\int_{r_0}^M\sigma(s)ds.
\]
The existence of $T_0$ follows from
\[
%\begin{split}
%&
\frac{d_1^2}{|T_-^u|}\leq\int_{T_-^u}^0\vert\dot{u}(t)\vert^2dt\leq 2a,\qquad
%&
\frac{d_1^2}{T_+^u}\leq\int_0^{T_+^u}\vert\dot{u}(t)\vert^2dt\leq 2a,
%\end{split}
\]
where $d_1=d(\partial\Omega,\{x: U(x)>U_0\})$.

Let $\{u_j\}\subset\mathcal{U}$ be a minimizing sequence
\begin{equation}
\lim_{j\rightarrow+\infty}\mathcal{A}(u_j,(T_-^{u_j},T_+^{u_j})) =
\inf_{u\in\mathcal{U}}\mathcal{A}(u,(T_-^u,T_+^u)) := a_0\leq a.
\label{a0bound}
\end{equation}
We can assume that each $u_j$ satisfies (\ref{Bound-E}) and
(\ref{Bound}). By considering a subsequence, that we still denote by $\{u_j\}$,
we can also assume that there exist $T_-^\infty$, $T_+^\infty$ with $-\infty\leq
T_-^\infty\leq-T_0<T_0\leq T_+^\infty \leq +\infty$ and a continuous
map $u^*:(T_-^\infty,T_+^\infty)\rightarrow\R^n$ such that
\begin{equation}\label{uStar}
\begin{split}
&\lim_{j\rightarrow+\infty}T_\pm^{u_j}=T_\pm^\infty,\\
&\lim_{j\rightarrow+\infty}u_j(t)=u^*(t),\;\;t\in(T_-^\infty,T_+^\infty),
\end{split}
\end{equation}
and in the last limit the convergence is uniform on bounded
intervals. This follows from (\ref{Bound}) which
implies that the sequence $\{u_j\}$ is equi-bounded and from (\ref{Bound-E}) which implies
\begin{equation}\label{Holder}
  \vert u_j(t_1)-u_j(t_2)\vert\leq
  \left\vert\int_{t_1}^{t_2}\vert\dot{u}_j(t)\vert dt\right\vert\leq\sqrt{a}\vert t_1-t_2\vert^\frac{1}{2},
\end{equation}
so that the sequence is also equi-continuous.

By passing to a further subsequence we can also assume
that $u_j\rightharpoonup u^*$ in
$W^{1,2}((T_1,T_2);\R^n)$ for each $T_1$, $T_2$ with
$T_-^\infty<T_1<T_2<T_+^\infty$.  This follows from (\ref{Bound-E}),
which implies
\[
\frac{1}{2}\int_{T_-^{u_j}}^{T_+^{u_j}}\vert\dot{u}_j\vert^2dt \leq
\mathcal{A}(u_j,(T_-^{u_j},T_+^{u_j}))\leq a,
\]
and from the fact that each map $u_j$ satisfies (\ref{Bound}) and
therefore is bounded in $L^2((T_-^{u_j},T_+^{u_j});\R^n)$.

\noindent We also have
\begin{equation}\label{IV}
\mathcal{A}(u^*,(T_-^\infty,T_+^\infty))\leq a_0.
\end{equation}

\noindent Indeed, from the lower semicontinuity of the norm, for each
$T_1$, $T_2$ with $T_-^\infty<T_1<T_2<T_+^\infty$ we have
\[
\int_{T_1}^{T_2}\vert\dot{u}^*\vert^2dt \leq
\liminf_{j\rightarrow+\infty}\int_{T_1}^{T_2}\vert\dot{u}_j\vert^2dt.
\]
This and the fact that $u_j$ converges to $u^*$ uniformly in
$[T_1,T_2]$
imply
\[
\mathcal{A}(u^*,(T_1,T_2)) \leq
\liminf_{j\rightarrow+\infty}\mathcal{A}(u_j,(T_1,T_2)) \leq
\liminf_{j\rightarrow+\infty}\mathcal{A}(u_j,(T_-^{u_j},T_+^{u_j}))=a_0.
\]
Since this is valid for each $T_-^\infty<T_1<T_2<T_+^\infty$ the claim
(\ref{IV}) follows.

\begin{lemma}\label{timelemma}
Define
 $T_-^\infty\leq T_-\leq-T_0<T_0\leq T_+\leq
T_+^\infty$ by setting
\[
\begin{split}
& T_-=\inf\{t\in(T_-^\infty,0]:u^*((t,0])\subset\Omega\}\\
&T_+=\sup\{t\in(0,T_+^\infty):u^*([0,t))\subset\Omega\}.
\end{split}
\]
Then
\begin{enumerate}
\item
\begin{equation}\label{IV0}
\mathcal{A}(u^*,(T_-,T_+))=a_0.
\end{equation}
\item $T_+<+\infty$ implies $\lim_{t\rightarrow T_+}u^*(t)=x_+$ for some $x_+\in\Gamma_+$ and
$\Gamma_+\in\{\Gamma_1,\ldots,\Gamma_N\}\setminus\{\Gamma_-\}$.
\item $T_+=+\infty$ implies
\begin{equation}\label{LimD0}
\lim_{t\rightarrow+\infty}d(u^*(t),\Gamma_+)=0,
\end{equation}
 for some $\Gamma_+\in\{\Gamma_1,\ldots,\Gamma_N\}\setminus\{\Gamma_-\}$.
\end{enumerate}
Corresponding statements apply to $T_-$.
\end{lemma}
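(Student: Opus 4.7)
The plan is to analyze the endpoint behavior of $u^*$ at $T_+$ first (the case of $T_-$ being symmetric), establishing claims (2) and (3), and only afterwards to deduce the action identity in (1). Passing \eqref{Holder} to the pointwise limit yields
\[
|u^*(t_1) - u^*(t_2)| \leq \sqrt{a}\,|t_1-t_2|^{1/2}, \qquad t_1, t_2 \in (T_-^\infty, T_+^\infty),
\]
so whenever $T_+<+\infty$ the limit $x_+:=\lim_{t\to T_+^-}u^*(t)$ exists. The uniform convergence $u_j\to u^*$ on bounded subintervals together with $u_j((T_-^{u_j},T_+^{u_j}))\subset\Omega$ gives $u^*\bigl((T_-^\infty,T_+^\infty)\bigr)\subset\overline{\Omega}$, and the supremum defining $T_+$ then forces $x_+\in\partial\Omega$, i.e.\ $x_+\in\Gamma_+$ for a unique component $\Gamma_+$.

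To establish $\Gamma_+\neq\Gamma_-$ when $T_+<+\infty$ I split into two sub-cases. If $T_+=T_+^\infty$, the convergence $T_+^{u_j}\to T_+$ combined with the uniform Hölder estimate for the $u_j$ gives $u_j(T_+^{u_j})\to x_+$; since every $u_j(T_+^{u_j})$ lies in the closed set $\partial\Omega\setminus\Gamma_-$, so does $x_+$. The delicate sub-case is $T_+<T_+^\infty$, handled by contradiction: assuming $x_+\in\Gamma_-$ I build a competitor $v\in\mathcal{U}$ with $\mathcal{A}(v)<a_0$ as follows. Choose $\tau<T_+$ so close to $T_+$ that $u^*(\tau)\in O_\delta$, define $v$ to coincide with $u^*$ on $(T_-^\infty,\tau]$, insert a short chord inside $O_\delta$ from $u^*(\tau)$ to the nearest point of $\Gamma_-$ (whose action is $O(\delta\,\sqrt{\max_{\overline{O_\delta}}U}\,)$ and hence $o(1)$ as $\delta\to 0$), and finally concatenate with a translated copy of the test map $\tilde u$ connecting $\Gamma_-$ to $\partial\Omega\setminus\Gamma_-$. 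The saving from removing $u^*|_{(\tau,T_+^\infty)}$ is bounded below by a uniform $\mu>0$ stemming from the fact that $u^*(0)$ lies on the level set $\{U=U_0\}$, which is disjoint from $\partial O_\delta$ by the very choice of $U_0$; producing this quantitative lower bound—and calibrating the scale $\delta$ against it—is the main technical obstacle.

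The case $T_+=+\infty$ of claim (3) is handled by first showing $d(u^*(t),\partial\Omega)\to 0$: otherwise a sequence $t_n\to+\infty$ with $U(u^*(t_n))\geq\mu>0$, together with the uniform Hölder estimate and continuity of $U$, yields disjoint intervals of uniform positive length on which $U(u^*)\geq\mu/2$, contradicting $\int U(u^*)\,dt\leq\mathcal{A}(u^*,\cdot)\leq a_0$. Connectedness of the image of $u^*$ and the positive gap between the distinct components $\Gamma_i$ then trap $u^*(t)$ eventually inside a tubular neighborhood of a single component $\Gamma_+$; the non-triviality $\Gamma_+\neq\Gamma_-$ is argued exactly as in the preceding paragraph.

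Finally for claim (1): the bound $\mathcal{A}(u^*,(T_-,T_+))\leq a_0$ is immediate from \eqref{IV} and the nonnegativity of the integrand. For the reverse inequality, when both $T_\pm$ are finite the restriction $u^*|_{[T_-,T_+]}$ is an admissible element of $\mathcal{U}$ (its endpoints lying on distinct components by (2)), hence has action $\geq a_0$. When $T_+=+\infty$ (or $T_-=-\infty$), truncating $u^*$ at a large $T$ and extending by a short segment to a nearest point of $\Gamma_+$ produces an element of $\mathcal{U}$ whose action differs from $\mathcal{A}(u^*,(T_-,T))$ by a quantity vanishing as $T\to+\infty$ (because $U\to 0$ inside the tube pinning $u^*(T)$), giving $\mathcal{A}(u^*,(T_-,+\infty))\geq a_0$ in the limit.
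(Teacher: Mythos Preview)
Your treatment of the existence of the limit $x_+$ when $T_+<+\infty$, of the decay $d(u^*(t),\partial\Omega)\to 0$ when $T_+=+\infty$, and of claim~(i) is correct and parallels the paper's argument closely. The genuine gap is in your proof that $\Gamma_+\neq\Gamma_-$.

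In the sub-case $T_+<T_+^\infty$ your competitor $v$ cannot have action below $a_0$. It contains a full translated copy of $\tilde u$, whose action is $a\geq a_0$, and in addition the piece $u^*|_{(T_-^\infty,\tau]}$ (with $\tau>T_0>0$) contributes a strictly positive amount; hence $\mathcal{A}(v)>a_0$ regardless of any ``saving''. Your saving term $\mathcal{A}(u^*,(\tau,T_+^\infty))$ is bounded above by $\mathcal{A}(u^*,(T_-^\infty,T_+^\infty))\leq a_0\leq a$ via \eqref{IV}, so it can never compensate for the cost $a$ of appending $\tilde u$. There are also structural problems: the chord terminates on $\Gamma_-\subset\partial\Omega$, so $v$ leaves $\Omega$ in its interior and is not in $\mathcal{U}$; and $u^*|_{(T_-^\infty,T_-)}$ need not lie in $\Omega$ nor approach $\Gamma_-$ at its left end. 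Finally, when $T_+=+\infty$ you invoke ``the preceding paragraph'', but then $T_+=T_+^\infty=+\infty$ and neither of your two sub-cases applies (there is no endpoint $u_j(T_+^{u_j})\to x_+$, and $T_+<T_+^\infty$ fails).

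The route the paper takes is to work with the minimizing sequence $\{u_j\}$ rather than with $u^*$. If $\Gamma_+=\Gamma_-$, pick $t_1$ close to $T_+$ so that $u^*(t_1)$, and hence $u_j(t_1)$ for $j$ large, lies in a small neighbourhood of $\Gamma_-$. Prepending a short chord from $\Gamma_-$ to $u_j(t_1)$ and following $u_j$ on $[t_1,T_+^{u_j}]$ produces (after a time-shift to restore the normalization $U(\cdot(0))=U_0$) an element of $\mathcal{U}$, whence $\mathcal{A}(u_j,(t_1,T_+^{u_j}))\geq a_0-o(1)$. On the other hand $u_j|_{(T_-^{u_j},t_1)}$ is a path in $\overline\Omega$ from $\Gamma_-$ back to a neighbourhood of $\Gamma_-$ that passes through the level set $\{U=U_0\}$ at $t=0$; this forces its action to be bounded below by a fixed $c>0$ independent of $j$. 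Summing gives $\mathcal{A}(u_j,(T_-^{u_j},T_+^{u_j}))\geq a_0+c-o(1)$, contradicting that $\{u_j\}$ is minimizing.
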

\begin{proof}
We first prove $(ii)$, $(iii)$. If $T_+<+\infty$ the existence of 
$\lim_{t\rightarrow T_+}u^*(t)$ follows from (\ref{Holder}) which
implies that $u^*$ is a $C^{0,\frac{1}{2}}$ map. The limit $x_+$
belongs to $\partial\Omega$ and therefore to $\Gamma_+$ for some
$\Gamma_+\in\{\Gamma_1,\ldots,\Gamma_N\}$.
%\setminus\{\Gamma_-\}$.
Indeed,
$x_+\not\in\partial\Omega$ would imply the existence of $\tau>0$ such
that, for $j$ large enough,
%\[
%u_j(t)\in\Omega,\;\;t\in[T_+,T_++\tau]
%\]
\[
d(u_j([T_+,T_++\tau]),\partial\Omega)\geq \frac{1}{2}d(x_+,\partial\Omega),
\]
in contradiction with the definition of $T_+$.  If $T_+=+\infty$ and
(iii) does not hold there is $\delta>0$ and a diverging sequence
$\{t_j\}$ such that
\[
d(u^*(t_j),\partial\Omega)\geq\delta.
\]
Set $U_m=\min_{d(x,\partial\Omega)=\delta}U(x)>0$. From the uniform
continuity of $U$ in $\{\vert x\vert\leq M\}$ ($M$ as in (\ref{Bound})) it
follows that there is $l>0$ such that
\[
\vert
U(x_1)-U(x_2)\vert\leq\frac{1}{2}U_m,\;\;\text{ for }\;\vert
x_1-x_2\vert\leq l, \ x_1,x_2\in \{\vert x\vert\leq M\}.
\]
This and $u^*\in C^{0,\frac{1}{2}}$ imply
\[
U(u^*(t))\geq\frac{1}{2}U_m,\;\;t\in I_j =
\Bigl(t_j-\frac{l^2}{a},t_j+\frac{l^2}{a}\Bigr),
\]
and, by passing to a subsequence, we can assume that the intervals
$I_j$ are disjoint. Therefore for each $T>0$ we have
\[
\sum_{t_j\leq T}\frac{l^2U_m}{a}\leq\int_0^TU(u^*(t))dt\leq a_0,
\]
which is impossible for $T$ large. This establishes (\ref{LimD0}) for
some $\Gamma_+\in\{\Gamma_1,\ldots,\Gamma_N\}$.  It remains to show
that $\Gamma_+\neq\Gamma_-$. This is a consequence of the minimizing
character of $\{u_j\}$. Indeed, $\Gamma_+=\Gamma_-$ would imply the
existence of a constant $c>0$ such that
$\lim_{j\to\infty}\mathcal{A}(u_j,(T_-^{u_j},T_+^{u_j}))\geq a_0+c$.

Now we prove $(i)$.  $T_+- T_-<+\infty$, implies that $u^*$ is an
element of $\mathcal{U}$ with $T_\pm^{u^*}= T_\pm$. It follows that
$\mathcal{A}(u^*,(T_-,T_+))\geq a_0$, which together with (\ref{IV})
imply (\ref{IV0}).  Assume now $T_+- T_-=+\infty$. If $T_+=+\infty$,
(\ref{LimD0}) implies that, given a small number $\epsilon>0$, there
are $t_\epsilon$ and $\bar{x}_\epsilon\in\partial\Omega$ such that
${\vert u^*(t_\epsilon)-\bar{x}_\epsilon\vert=\epsilon}$ and the
segment joining $u^*(t_\epsilon)$ to $\bar{x}_\epsilon$ belongs
to $\overline\Omega$.  Set
\[
v_\epsilon(t) =
(1-(t-t_\epsilon))u^*(t_\epsilon) +
(t-t_\epsilon)\bar{x}_\epsilon,
\;\;t\in(t_\epsilon,t_\epsilon+1].
\]
From the uniform continuity of $U$ there is $\eta_\epsilon>0$, $\lim_{\epsilon\rightarrow 0}\eta_\epsilon=0$, such that $U(v_\epsilon(t))\leq\eta_\epsilon$, for $t\in[t_\epsilon,t_\epsilon+1]$. Therefore we have
\[
 \mathcal{A}(v_\epsilon,(t_\epsilon,t_\epsilon+1))
 \leq\frac{1}{2}\epsilon^2+\eta_\epsilon.
\]
If $T_->-\infty$ the map $u_\epsilon =
\mathbbm{1}_{[T_-,t_\epsilon]}u^* +
\mathbbm{1}_{(t_\epsilon,t_\epsilon+1]}v_\epsilon$
  belongs to $\mathcal{U}$ and it results
\[
a_0\leq\mathcal{A}(u_\epsilon,(T_-,t_\epsilon+1)) =
\mathcal{A}(u^*,(T_-,t_\epsilon)) +
\mathcal{A}(v_\epsilon,(t_\epsilon,t_\epsilon+1))
\leq\mathcal{A}(u^*,(T_-,T_+))+\frac{1}{2}\epsilon^2+\eta_\epsilon.
\]
Since this is valid for all small $\epsilon>0$ we get
\[
a_0\leq\mathcal{A}(u^*,(T_-,T_+)),
\]
that together with (\ref{IV})
establishes (\ref{IV0}) if $T_->-\infty$ and $T_+=+\infty$. The
discussion of the other cases where $T_+-T_- =+\infty$ is similar.
\end{proof}

We observe that there are cases with $T_+<T_+^\infty$ and/or
  $T_->T_-^\infty$, see Remark~\ref{tempidiversi}.

\begin{lemma}\label{FirstInt}
The map $u^*$ satisfies (\ref{Newton}) and (\ref{EnergConserv}) in
$(T_-,T_+)$.
\end{lemma}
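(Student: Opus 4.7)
The plan is to derive the Newton equation by a standard Euler--Lagrange argument applied to $u^*$, and then to pin down the value of the (necessarily constant) energy by exploiting the freedom to reparametrize time within the admissible class $\mathcal{U}$.

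First I would establish local minimality: for every compact subinterval $[t_1,t_2]\subset(T_-,T_+)$, the map $u^*$ minimizes $\mathcal{A}(\cdot,(t_1,t_2))$ among $W^{1,2}$ curves $u$ with $u(t_1)=u^*(t_1)$, $u(t_2)=u^*(t_2)$ and $u([t_1,t_2])\subset\Omega$. Otherwise, gluing a strictly cheaper competitor with $u^*$ outside $[t_1,t_2]$---and, whenever one of $T_\pm$ is infinite, truncating $u^*$ at a large time and joining it to $\partial\Omega$ by a short linear segment of arbitrarily small action, exactly as in the proof of Lemma~\ref{timelemma}(i)---would produce an element of $\mathcal{U}$ of action strictly below $a_0$, contradicting Lemma~\ref{timelemma}(i). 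Since $u^*([t_1,t_2])$ has positive distance from $\partial\Omega$, any $\phi\in C_c^\infty((t_1,t_2);\R^n)$ gives an admissible perturbation $u^*+\varepsilon\phi$ for small $\varepsilon$; imposing $\phi(0)=0$ whenever $0\in(t_1,t_2)$ preserves the normalization $U(u(0))=U_0$. Computing the first variation yields the weak Euler--Lagrange equation, and since $U\in C^2$, a standard bootstrap gives $u^*\in C^2((T_-,T_+);\R^n)$ with $\ddot u^*=U_x(u^*)$ pointwise.

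Once Newton's equation holds, $E(t):=\tfrac{1}{2}|\dot u^*(t)|^2-U(u^*(t))$ is differentiable with $\dot E\equiv 0$, so $E\equiv c$ for some constant $c\in\R$. To show $c=0$ I use a time-rescaling competitor: the map $v_\lambda(t):=u^*(t/\lambda)$ on $(\lambda T_-,\lambda T_+)$ satisfies
\[
\mathcal{A}(v_\lambda,(\lambda T_-,\lambda T_+))=\frac{1}{2\lambda}\int_{T_-}^{T_+}|\dot u^*|^2\,dt+\lambda\int_{T_-}^{T_+}U(u^*)\,dt.
\]
If $T_+-T_-<+\infty$, a time translation restores the normalization without changing the action and $v_\lambda$ becomes admissible, so the minimality of $u^*$ at $\lambda=1$ forces $\int_{T_-}^{T_+}E(t)\,dt=0$ and hence $c=0$. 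If $T_+-T_-=+\infty$, the bound $\mathcal{A}(u^*,(T_-,T_+))=a_0<+\infty$ forces both $\int|\dot u^*|^2\,dt$ and $\int U(u^*)\,dt$ to be finite; but $c>0$ would give $|\dot u^*|^2\geq 2c$ a.e.\ and $c<0$ would give $U(u^*)\geq -c>0$ a.e., each yielding a divergent integral. Hence $c=0$ in all cases.

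The main obstacle I anticipate is that $T_\pm$ may be infinite, while elements of $\mathcal{U}$ are by definition supported on finite intervals with endpoints on $\partial\Omega$. This is handled in the Euler--Lagrange step by the truncation-plus-short-segment device of Lemma~\ref{timelemma}(i) that converts any local improvement into a genuine competitor, and in the energy step by the integrability dichotomy above, where the infinite-interval case is resolved not by a rescaled competitor but directly by finiteness of $\mathcal{A}(u^*,(T_-,T_+))$.
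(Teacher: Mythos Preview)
Your argument is correct and follows the same two-step architecture as the paper: local minimality on compact subintervals gives Newton's equation via the first variation and regularity, and a time-variation argument then pins the energy constant to zero. Two points of contrast are worth noting.

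For local minimality, the paper splices a hypothetical cheaper competitor directly into the minimizing sequence $u_j$ (with small endpoint corrections $\delta_{ij}=u_j(T_i)-u^*(T_i)\to 0$ and a time shift to restore the normalization), which automatically produces elements of $\mathcal{U}$ and avoids any separate handling of infinite $T_\pm$; your route via gluing into $u^*$ and then truncating as in Lemma~\ref{timelemma}(i) is slightly longer but equally valid. For the identification $c=0$, the paper uses explicit piecewise-linear time reparametrizations on a finite window in both the finite and infinite cases, in the latter letting the window length tend to infinity so that a boundary term vanishes by finiteness of the action. Your finite case (the global rescaling $v_\lambda$) is essentially the same computation in different clothing, but your infinite case is genuinely simpler: you bypass any competitor construction and argue directly that $c>0$ forces $\int|\dot u^*|^2=\infty$ while $c<0$ forces $\int U(u^*)=\infty$, each contradicting $\mathcal{A}(u^*,(T_-,T_+))=a_0<\infty$. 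One minor point: your remark about imposing $\phi(0)=0$ is unnecessary, since the local minimality you stated and proved is already unconstrained (the normalization is restored by a time translation of the glued competitor), so any $\phi\in C_c^\infty$ is admissible for the first variation.
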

\begin{proof}
  1. We first show that
 % $u^*$ is a minimizer of $\mathcal{A}$ among
%compact perturbations.
%
for each $T_1$, $T_2$ with $T_-<T_1<T_2<T_+$ we have
\begin{equation}\label{Tmin}
  \mathcal{A}(u^*,(T_1,T_2))=\inf_{v\in\mathcal{V}}\mathcal{A}(v,(T_1,T_2)),\\
\end{equation}
where
\[
\mathcal{V}=\{v\in W^{1,2}((T_1,T_2);\R^n): v(T_i)=u^*(T_i),i=1,2;\,
  v([T_1,T_2])\subset\Omega\}.
\]
Suppose instead that there are $\eta>0$ and $v\in\mathcal{V}$ such that
\[
\mathcal{A}(v,(T_1,T_2))=\mathcal{A}(u^*,(T_1,T_2))-\eta.
\]
Set $w_j:(T_-^{u_j},T_+^{u_j})\rightarrow\Omega$ defined by
\[
w_j(t)=\left\{\begin{array}{l}
u_j(t),\;\;t\in(T_-^{u_j},T_1]\cup[T_2,T_+^{u_j}),\\
\stackrel{}{
v(t)+\displaystyle\frac{T_2-t}{T_2-T_1}\delta_{1j}+\frac{t-T_1}{T_2-T_1}\delta_{2j},
\;\;t\in(T_1,T_2),}
\end{array}\right.
\]
where $\delta_{ij}=u_j(T_i)-u^*(T_i)$, $i=1,2$, with $u_j$ as in (\ref{a0bound}).
Define $v_j:[T_-^{v_j},T_+^{v_j}]\rightarrow\R^n$ by
\[
v_j(t) = w_j(t - \tau_j),
\]
where $\tau_j$ is such that $U(v_j(0))=U_0$, as in
(\ref{Ad}). Note that
\begin{equation}
\mathcal{A}(v_j,(T_-^{v_j},T_+^{v_j}))= \mathcal{A}(w_j,(T_-^{u_j},T_+^{u_j})).
\label{equalaction}
\end{equation}
From (\ref{uStar}) we have
  $\lim_{j\to\infty}\delta_{ij}= 0, i=1,2$, so that
\[
\lim_{j\rightarrow+\infty}\mathcal{A}(w_j,(T_1,T_2))=
\mathcal{A}(v,(T_1,T_2))
=
\mathcal{A}(u^*,(T_1,T_2))-\eta
\leq\liminf_{j\rightarrow+\infty}\mathcal{A}(u_j,(T_1,T_2))-\eta.
\]
Therefore we have
\[
\begin{split}
  &\liminf_{j\rightarrow+\infty}\mathcal{A}(w_j,(T_-^{u_j},T_+^{u_j}))=
  \lim_{j\rightarrow+\infty}\mathcal{A}(w_j,(T_1,T_2))+
\liminf_{j\rightarrow+\infty}\mathcal{A}(u_j,(T_+^{u_j},T_1)\cup(T_2,T_+^{u_j}))\\
&\leq\liminf_{j\rightarrow+\infty}\mathcal{A}(u_j,(T_1,T_2))-\eta+
\liminf_{j\rightarrow+\infty}\mathcal{A}(u_j,(T_+^{u_j},T_1)\cup(T_2,T_+^{u_j}))\leq
a_0-\eta,
\end{split}
\]
that, given (\ref{equalaction}), is in contradiction with the
minimizing character of the sequence $\{u_j\}$.

\noindent
%2.
The fact that $u^*$ satisfies (\ref{Newton}) follows from (\ref{Tmin})
%1.
and regularity theory, see \cite{BGH}.
To show that $u^*$ satisfies (\ref{EnergConserv}) we
distinguish the case $T_+-T_-<+\infty$ from the case
$T_+-T_-=+\infty$.

\noindent
%3.
2. $T_+-T_-<+\infty$.
Given $t_0, t_1$ with $T_-<t_0<t_1<T_+$, let
$\phi:[t_0,t_1+\tau]\rightarrow[t_0,t_1]$ be linear, with $|\tau|$
small, and let $\psi:[t_0,t_1]\rightarrow[t_0,t_1+\tau]$ be the
inverse of $\phi$. Define $u_\tau:[T_-,T_++\tau]\rightarrow\R^n$
by setting
\begin{equation}\label{ugamma}
u_\tau(t)=\left\{\begin{array}{l}
u^*(t),\;\;t\in[T_-,t_0],\\
u^*(\phi(t)),\;\;t\in[t_0,t_1+\tau],\\
u^*(t-\tau),\;\;t\in(t_1+\tau,T_++\tau)]
\end{array}\right.
\end{equation}
Note that $u_\tau\in\mathcal{U}$ with $T_-^{u_\tau}=T_-$ and
$T_+^{u_\tau}=T_++\tau$.  Since $u^*$ is a
minimizer we have
\begin{equation}\label{DerA}
\frac{d}{d\tau}\mathcal{A}(u_\tau,(T_-^{u_\tau},T_+^{u_\tau}))\vert_{\tau=0}=0.
\end{equation}
From (\ref{ugamma}), using also the change of variables $t=\psi(s)$, it
follows
\[
\begin{split}
&\mathcal{A}(u_\tau,(T_-^{u_\tau},T_+^{u_\tau}))-\mathcal{A}(u^*,(T_-,T_+))\\
& = \int_{t_0}^{t_1+\tau}
  \Big(\frac{\dot{\phi}^2(t)}{2}\vert\dot{u}^*(\phi(t))\vert^2 +
  U(u^*(\phi(t)))\Big)dt -\int_{t_0}^{t_1}
  \Big(\frac{1}{2}\vert\dot{u}^*(t)\vert^2+U(u^*(t))\Big)dt\\
& = \int_{t_0}^{t_1} \Big(\frac{1 -
    \dot{\psi}(t)}{2\dot{\psi}(t)}\vert\dot{u}^*(t)\vert^2 +
  (\dot{\psi}(t)-1)U(u^*(t))\Big)dt\\
& = \int_{t_0}^{t_1} \Big(\frac{-\frac{\tau}{t_1-t_0}}{2(1 +
    \frac{\tau}{t_1-t_0})}\vert\dot{u}^*(t)\vert^2 +
  \frac{\tau}{t_1-t_0}U(u^*(t))\Big)dt\\
&= - \frac{\tau}{t_1-t_0} \int_{t_0}^{t_1}\Big(
  \frac{\vert\dot{u}^*(t)\vert^2} {2(1 + \frac{\tau}{t_1-t_0})} -
  U(u^*(t))\Big)dt .
\end{split}
\]
This and (\ref{DerA}) imply
\begin{equation}\label{Int0}
\int_{t_0}^{t_1}\Big(\frac{1}{2}\vert\dot{u}^*(t)\vert^2-U(u^*(t))\Big)dt=0.
\end{equation}
Since this holds for all $t_0,t_1$, with $T_-<t_0<t_1<T_+$, then (\ref{EnergConserv}) follows.

\noindent
%4.
3. $T_+-T_-=+\infty$. We only consider the case
$T_+=+\infty$. The discussion of the other cases is similar.
Let $T\in(T_-,+\infty)$, let $T_-<t_0<t_1<T$ and let
$\phi:[t_0,T]\rightarrow[t_0,T]$ be linear in the intervals
$[t_0,t_1+\tau]$, $[t_1+\tau,T]$, with $|\tau|$ small, and such that
$\phi([t_0,t_1+\tau])=[t_0,t_1]$.
Define $u_\tau:(T_-,+\infty)\rightarrow\R^n$ by setting
\[
u_\tau(t)=\left\{\begin{array}{l}
u^*(t),\;\;t\in(T_-,t_0]\cup[T,+\infty)\\
u^*(\phi(t)),\;\;t\in[t_0,T].
\end{array}\right.
\]
We have
\[
\begin{split}
&\mathcal{A}(u_\tau,(T_-,T))-\mathcal{A}(u^*,(T_-,T))\\
& = \int_{t_0}^{t_1}\Big(\frac{-\frac{\tau}{t_1-t_0}}{2(1 +
    \frac{\tau}{t_1-t_0})}\vert\dot{u}^*(t)\vert^2 +
  \frac{\tau}{t_1-t_0}U(u^*(t))\Big)dt +
  \int_{t_1}^{T}\Big(\frac{\frac{\tau}{T-t_1}}{2(1 +
    \frac{\tau}{T-t_1})}\vert \dot{u}^*(t)\vert^2
  -\frac{\tau}{T-t_1}U(u^*(t))\Big)dt.
\end{split}
\]
Since $u^*$ restricted to the interval $[t_0,T]$ is a minimizer of
(\ref{Tmin}), by differentiating with respect to $\tau$ and setting
$\tau=0$ we obtain
\[
-\frac{1}{t_1-t_0}\int_{t_0}^{t_1}\Big(\frac{1}{2}\vert\dot{u}^*(t)\vert^2
-U(u^*(t))\Big)dt
+\frac{1}{T-t_1}\int_{t_1}^{T}\Big(\frac{1}{2}\vert\dot{u}^*(t)\vert^2
-U(u^*(t))\Big)dt=0.
\]
From (\ref{IV}) it follows that the second term in this expression
converges to zero when $T\rightarrow+\infty$. Therefore, after taking
the limit for $T\rightarrow+\infty$, we get back to (\ref{Int0}) and,
as before, we conclude that (\ref{EnergConserv}) holds.
\end{proof}

\begin{lemma}\label{limd0}
Assume that $\lim_{t\rightarrow T_+}u^*(t)=p\in P$. Then
\[
T_+=+\infty.
\]
\end{lemma}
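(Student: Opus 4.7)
The plan is to combine energy conservation with the fact that $p$ is a critical point of $U$ (so $\nabla U(p)=0$ and $U(p)=0$) to show that the distance $r(t):=|u^*(t)-p|$ satisfies a Gr\"onwall-type lower bound that forbids it from reaching $0$ in finite time. No hyperbolicity of $p$ is needed; just $C^2$ regularity of $U$ and the fact that $p\in P$ is a critical point lying in $\partial\Omega$.

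First I would exploit the fact that $U\in C^2$, $U(p)=0$, and $\nabla U(p)=0$ to obtain $\rho>0$ and $M>0$ such that
\[
U(x)\leq M|x-p|^2\quad\text{for all }|x-p|\leq \rho.
\]
Since $u^*(t)\to p$ as $t\to T_+$, there exists $t_0\in(T_-,T_+)$ with $|u^*(t)-p|\leq\rho$ for every $t\in[t_0,T_+)$. Moreover $u^*(t)\in\Omega$ on $(T_-,T_+)$ whereas $p\in\partial\Omega$, so $r(t)>0$ on $[t_0,T_+)$.

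Next I would use the energy identity \eqref{EnergConserv} from Lemma~\ref{FirstInt}, which gives
\[
|\dot u^*(t)|=\sqrt{2U(u^*(t))}\leq\sqrt{2M}\,r(t),\qquad t\in[t_0,T_+).
\]
Since $r$ is positive and locally Lipschitz on $[t_0,T_+)$, it is differentiable a.e.\ and
\[
|\dot r(t)|=\left|\frac{(u^*(t)-p)\cdot\dot u^*(t)}{|u^*(t)-p|}\right|\leq|\dot u^*(t)|\leq\sqrt{2M}\,r(t).
\]
Integrating the inequality $\dot r\geq -\sqrt{2M}\,r$ (or equivalently noting that $r(t)\,e^{\sqrt{2M}\,t}$ is nondecreasing) yields
\[
r(t)\geq r(t_0)\,e^{-\sqrt{2M}(t-t_0)},\qquad t\in[t_0,T_+).
\]

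Finally I would argue by contradiction: if $T_+<+\infty$, continuity of $u^*$ at $T_+$ gives $r(T_+)=0$; letting $t\to T_+^-$ in the lower bound above produces
\[
0=r(T_+)\geq r(t_0)\,e^{-\sqrt{2M}(T_+-t_0)}>0,
\]
a contradiction. Hence $T_+=+\infty$. The only mildly delicate point is the Lipschitz differentiation of $r$, handled once we know $r>0$ on $[t_0,T_+)$; after that the argument is a direct Gr\"onwall-style estimate, so I do not foresee a serious obstacle.
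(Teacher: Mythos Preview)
Your proof is correct and follows essentially the same approach as the paper: both use the $C^2$ regularity of $U$ at the critical point $p$ to get a quadratic upper bound $U(x)\leq c|x-p|^2$ near $p$, combine this with energy conservation \eqref{EnergConserv} to obtain the differential inequality $\frac{d}{dt}|u^*-p|\geq -\sqrt{2c}\,|u^*-p|$, and conclude via a Gr\"onwall-type argument that $|u^*-p|$ cannot vanish in finite time. Your version is slightly more detailed in justifying the differentiability of $r$, but the idea is the same.
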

\begin{proof}
Since $U$ is of class $C^2$ and $p$ is a critical point of $U$ there are constants $c>0$ and $\rho>0$ such that
\[U(x)\leq c\vert x-p\vert^2,\;\;x\in B_\rho(p)\cap\Omega.\]
Fix $t_\rho$ so that $u^*(t)\in B_\rho(p)\cap\Omega$ for $t\geq
t_\rho$. Then $T_+=+\infty$ follows from (\ref{EnergConserv}) and
\[
\frac{d}{dt}\vert u^*-p\vert \geq -\vert\dot{u}^*\vert=-\sqrt{2U(u^*)} \geq
-\sqrt{2c}\vert u^*-p\vert,\quad t\geq t_\rho.
\]
\end{proof}

We now show that if $\Gamma_+$ has positive diameter then $T_+<+\infty$. To prove this we first show that $T_+=+\infty$ implies $u^*(t)\rightarrow p\in P$ as $t\rightarrow+\infty$, then we conclude that this is in contrast with (\ref{IV0}).
\begin{lemma}\label{Convp}
If $T_+=+\infty$, then
there is $p\in P$ such that
\begin{equation}\label{LimP}
\lim_{t\rightarrow+\infty}u^*(t)=p.
\end{equation}
An analogous statement applies to $T_-$.
\end{lemma}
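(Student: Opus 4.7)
My plan is to treat $u^*$ as a bounded orbit of an autonomous first-order system in phase space and invoke the classical properties of its $\omega$-limit set. By Lemma~\ref{FirstInt} the curve $\gamma(t):=(u^*(t),\dot{u}^*(t))\in\R^{2n}$, $t\in(T_-,+\infty)$, solves $\dot\gamma=F(\gamma)$ with $F(x,v)=(v,U_x(x))$. The bound $|u^*|\leq M$ from (\ref{Bound}) together with the energy identity (\ref{EnergConserv}), which gives $|\dot u^*|^2=2U(u^*)\leq 2\max_{|x|\leq M}U(x)$, shows that $\gamma$ is a bounded orbit. Since $F$ is $C^1$, the $\omega$-limit set
\[
\omega:=\bigcap_{T>T_-}\overline{\gamma([T,+\infty))}
\]
is nonempty and compact, connected as a nested intersection of compact connected sets, and invariant under the flow of $F$ by the standard argument based on continuous dependence on initial data.

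Next I would localize $\omega$ using Lemma~\ref{timelemma}(iii). Since $d(u^*(t),\Gamma_+)\to 0$, $U$ is continuous, and $U$ vanishes on the compact set $\Gamma_+$, we get $U(u^*(t))\to 0$, hence $|\dot u^*(t)|^2=2U(u^*(t))\to 0$, so
\[
\omega\subset\Gamma_+\times\{0\}.
\]
The main step is then to upgrade this to $\omega\subset P\times\{0\}$. Suppose for contradiction that some $(x,0)\in\omega$ has $x\in\Gamma_+\setminus P$, so $U_x(x)\neq 0$. By invariance, the solution $u$ of (\ref{Newton}) with data $u(0)=x$, $\dot u(0)=0$ satisfies $(u(s),\dot u(s))\in\omega$ for all small $s>0$. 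But $\ddot u(0)=U_x(x)\neq 0$ gives $\dot u(s)=sU_x(x)+o(s)\neq 0$ for small $s>0$, contradicting $\omega\subset\Gamma_+\times\{0\}$.

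Therefore $\omega$ is a nonempty connected subset of the \emph{finite} set $P\times\{0\}$, hence a singleton $\{(p,0)\}$ for some $p\in P$. This immediately yields $\lim_{t\to+\infty}u^*(t)=p$, proving (\ref{LimP}); the analogous statement for $T_-$ is obtained by reversing time. The only delicate point is making the invariance (and connectedness) of $\omega$ fully rigorous, but both are standard once $\gamma$ is forward-bounded and $F$ is locally Lipschitz, conditions clearly met here; in particular the argument uses no more of $\mathbf{H}_1$ than the standing finiteness of $P$.
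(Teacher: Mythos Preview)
Your proof is correct, and it is genuinely different from---and considerably shorter than---the paper's.

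The paper argues by a local variational mechanism: assuming a subsequence $u^*(\tau_j)$ stays at distance $\ge q$ from $P$, it passes to a nearby regular boundary point $\bar x\in\Gamma_+\setminus P$, sets up normal coordinates $(\xi,s)$ in which $U\asymp cs$, and then uses the \emph{minimality} of $u^*$ (comparing $\mathcal{J}(u^*,(t_0,t_0'))$ against the Jacobi length of a straight segment to $\partial\Omega$) to show that $u^*$ cannot escape a shrinking ball $Q_0$ through its spherical part. Iterating produces geometrically decaying step sizes and a finite arrival time at $\partial\Omega\setminus P$, contradicting $T_+=+\infty$. This yields quantitative control (arrival time $\lesssim \delta^{1/2}$) but requires the boundary coordinate apparatus and, crucially, the minimizing property via (\ref{a0bound}) and (\ref{IV}).

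Your route bypasses all of this by invoking the standard $\omega$-limit-set theory for the phase-space orbit $(u^*,\dot u^*)$: boundedness gives a nonempty compact connected $\omega$; (\ref{LimD0}) and (\ref{EnergConserv}) force $\omega\subset\Gamma_+\times\{0\}$; invariance rules out any regular boundary point (since the orbit through $(x,0)$ with $U_x(x)\neq 0$ immediately leaves $\{v=0\}$); and connectedness plus finiteness of $P$ pins $\omega$ to a single $(p,0)$. This uses only that $u^*$ solves (\ref{Newton}) and (\ref{EnergConserv}), not that it is a minimizer, so it actually proves a slightly stronger statement. The paper's approach, in return, extracts a quantitative finite-arrival estimate near regular boundary points, but that information is not used elsewhere in the argument.
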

\begin{proof}
If $\Gamma_+=\{p\}$ for some $p\in P$,  then
(\ref{LimP}) follows by (\ref{LimD0}). Therefore we assume that
$\Gamma_+$ has positive diameter.  The idea of the proof is to show
that if $u^*(t)$ gets too close to $\partial\Gamma_+\setminus P$ it is
forced to end up on $\Gamma_+\setminus P$ in a finite time in
contradiction with $T^*=+\infty$.

If (\ref{LimP}) does not hold there is $q>0$ and a sequence
%$\tau_j\rightarrow+\infty$
{$\{\tau_j\}$, with $\lim_{j\to\infty}\tau_j=+\infty$,}
such that $d(u^*(\tau_j),P)\geq q$,
%$j=1,\ldots$
for all $j\in\N$.
Since, by
(\ref{Bound}) $u^*$ is bounded, using also (\ref{LimD0}), we can
assume that
\begin{equation}\label{tauSequence}
\lim_{j\rightarrow+\infty}u^*(\tau_j) = \bar{x},\;\;\text{for
  some}\;\;\bar{x}\in\Gamma_+\setminus\cup_{p\in P} B_q(p).
\end{equation}
The smoothness of $U$ implies that there
% is $\bar{r}>0$ and positive
%constants $r, c$ and $C$
 are positive constants $\bar{r}$, $r$, $c$ and $C$ such
  that
\begin{enumerate}
\item the orthogonal projection on
  $\pi:B_{\bar{r}}(\bar{x})\rightarrow\partial\Omega$ is well defined
  and $\pi(B_{\bar{r}}(\bar{x}))\subset\partial\Omega\setminus P$;
\item we have
\[
B_r(x_0)\subset B_{\bar{r}}(\bar{x}),\;\;\mbox{for all }
x_0\in\partial\Omega\cap B_{\frac{\bar{r}}{2}}(\bar{x});
\]
\item if $(\xi,s)\in\R^{n-1}\times\R$ are local coordinates with
  respect to a basis $\{e_1,\ldots,e_n\}$, $e_j=e_j(x_0)$, with
  $e_n(x_0)$ the unit interior normal to $\partial\Omega$ at
  $x_0\in\partial\Omega\cap B_{\frac{\bar{r}}{2}}(\bar{x})$ it results
  \begin{equation}\label{P2}
\frac{1}{2}c s\leq U(x(x_0,(\xi,s)))\leq 2c s,\;\;\vert
\xi\vert^2+s^2\leq r^2,\; s\geq h(x_0,\xi),
\end{equation}
where
\[
x=x(x_0,(\xi,s))=x_0+\sum_{j=1}^n \xi_je_j(x_0) + se_n(x_0),
\]
and $h:\partial\Omega\cap B_{\frac{\bar{r}}{2}}(\bar{x}) \times
\{\vert \xi\vert\leq r\}\rightarrow\R$, $\vert h(x_0,\xi)\vert\leq
C\vert \xi\vert^2$, for $\vert \xi\vert\leq r$, is a local
representation of $\partial\Omega$ in a neighborhood of $x_0$,
that is $U(x(x_0,(\xi,h(x_0,\xi))))=0$ for $\vert \xi\vert\leq r$.
\end{enumerate}
Fix a value $j_0$ of $j$ and set $t_0=\tau_{j_0}$.
%If $j_0$ is
%sufficiently large $x_0=\pi(u^*(t_0))$ is well defined,
%$x_0\in\partial\Omega\cap B_{\frac{\bar{r}}{2}}(\bar{x})$ and we have
%
\begin{figure}
%\psfragscanon
%\psfrag{x0}{$x_0$}\psfrag{xx0xis}{$x(x_0,(\xi,s))$}
%\psfrag{hx0xi}{$h(x_0,\xi)$}
%\psfrag{e1}{$e_1$}\psfrag{en}{$e_n$}
%\psfrag{xi}{$\xi$}\psfrag{s}{$s$}
%%\centerline{\epsfig{figure=coordxis.eps,width=3.5cm}
%\centerline{\includegraphics[width=3.5cm]{coordxis.pdf}
%  \hskip 2.cm
%\psfrag{x0}{$x_0$}\psfrag{ut0}{$u^*(t_0)$}
%\psfrag{dOm}{$\partial\Omega$}\psfrag{Q0}{$Q_0$}\psfrag{D0}{$D_0$}
%\psfrag{d}{$\delta$}\psfrag{kd}{$k\delta$}\psfrag{d/2}{${\delta}/{2}$}
%%
%%\epsfig{figure=boundary.eps,width=4cm}
%%
%\includegraphics[width=4cm]{boundary.pdf}}
%\psfragscanoff
\centerline{\includegraphics[width=10cm]{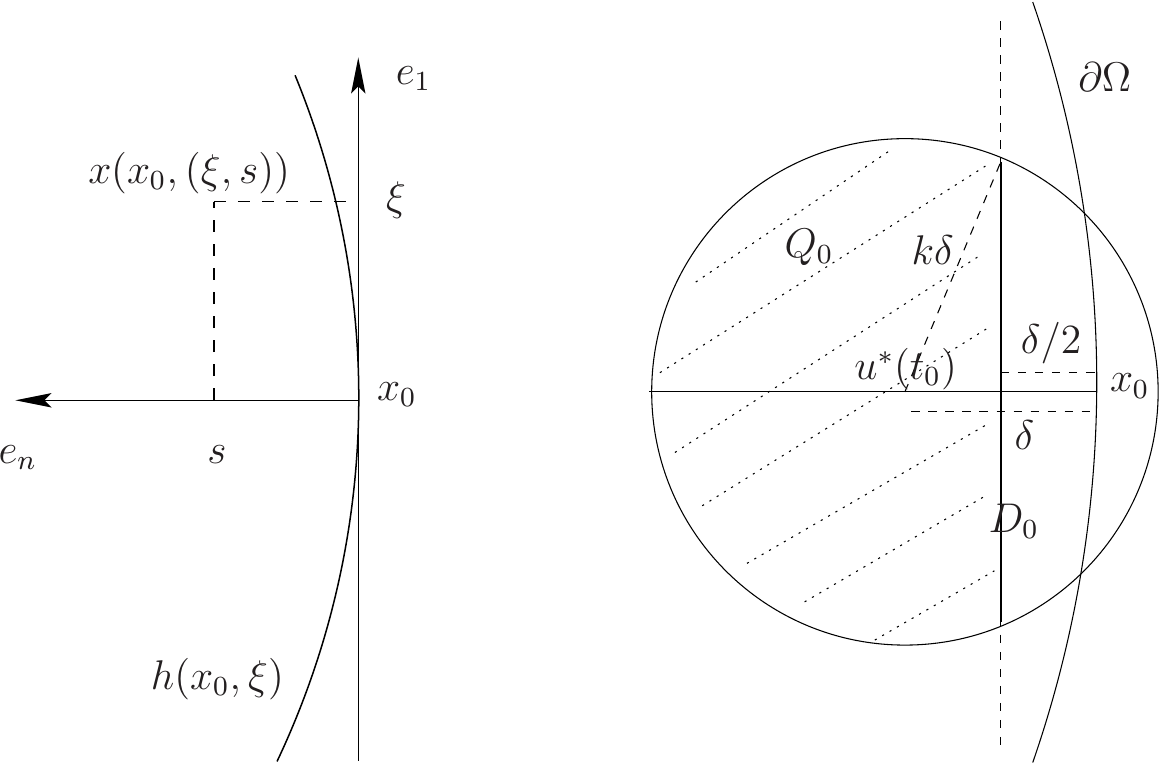}}
  \caption{The coordinates $(\xi,s)$ and the domain $Q_0$ in Lemma~\ref{Convp}.}
%\label{coordxis}
\end{figure}
If $j_0$ is sufficiently large, setting $t_0=\tau_{j_0}$ we
have that $x_0=\pi(u^*(t_0))$ is well defined. Moreover
$x_0\in\partial\Omega\cap B_{\frac{\bar{r}}{2}}(\bar{x})$ and
\[
u^*(t_0)=x_0+\delta e_n(x_0),\;\;\delta=\vert u^*(t_0)-x_0\vert.
\]
For $k=\frac{8}{3}\sqrt{2}$ let $Q_0$ be the set
\[
Q_0=\{x(x_0,(\xi,s)):\vert
\xi\vert^2+(s-\delta)^2<k^2\delta^2,\;s>{\delta}/{2}\}.
\]
Since $\delta\rightarrow 0$ as $j_0\rightarrow+\infty$ we can assume
that $\delta>0$ is so small ($\delta < \min\{\frac{1}{2Ck^2},
\frac{r}{1+k}\}$ suffices) that $\overline{Q}_0\subset\Omega\cap
B_r(x_0)$.

\vskip.2cm
\noindent
{\em Claim 1.} $u^*(t)$
leaves $\overline{Q}_0$ through the disc $D_0={\partial
Q_0\setminus\partial B_{k\delta}(u^*(t_0))}$.

\vskip.2cm
%Arguing as in the proof of Lemma \ref{limd0}
From (\ref{a0bound}) we have $a_0\leq\mathcal{A}(v,(T_-,T_+^v))$ for
each $W^{1,2}$ map ${v:(T_-,T_+^v]\rightarrow\R^n}$ that coincides
  with $u^*$ for $t\leq t_0$, and satisfies
  $v((t_0,T_+^v))\subset\Omega$, $v(T_+^v)\in\partial\Omega$ and
  (\ref{EnergConserv}). Therefore if we set
  \[
  w(s)=x_0+s e_n(x_0),
  \]
  $s\in[0,\delta]$, we have
\begin{equation}\label{Leftest}
a_0\leq \mathcal{A}(u^*,(T_-,t_0)) + \mathcal{J}(w,(0,\delta)).
\end{equation}
On the other hand, if $u^*(t_0^\prime)\in\partial Q_0(x_0)\cap\partial
B_{k\delta}(u^*(t_0))$, where
\[
t_0^\prime=\sup\{t>t_0:u^*([t_0,t))\subset\overline{Q}_0\setminus\partial
  B_{k\delta}(u^*(t_0))\},
\]
from (\ref{IV}) it follows
\begin{equation}\label{Rightest}
\mathcal{A}(u^*,(T_-,t_0))+\mathcal{J}(u^*,(t_0,t_0^\prime))\leq a_0.
\end{equation}
Using (\ref{P2}) we obtain
\begin{equation}\label{Upest}
\mathcal{J}(w,(0,\delta))\leq\frac{4}{3}c^\frac{1}{2}\delta^\frac{3}{2},
\end{equation}
and, since
\[
c\frac{\delta}{4}\leq
U(x(x_0,(\xi,s))),\quad (\xi,s)\in\overline{Q}_0(x_0),
\]
we also have, with $k$ defined above,
\begin{equation}\label{Downest}
\frac{8}{3}c^\frac{1}{2}\delta^\frac{3}{2} =
\frac{k}{\sqrt{2}}c^\frac{1}{2}\delta^\frac{3}{2}\leq
\frac{c^\frac{1}{2}\delta^\frac{1}{2}}{\sqrt{2}}
\int_{t_0}^{t_0^\prime}\vert\dot{u}^*(t)\vert dt \leq
\sqrt{2}\int_{t_0}^{t_0^\prime} \sqrt{U(u^*(t))}\vert\dot{u}^*(t)\vert
dt.
\end{equation}
From (\ref{Upest}) and (\ref{Downest}) it follows
\[
\mathcal{J}(w,(0,\delta)) \leq
\frac{1}{2}\mathcal{J}(u^*,(t_0,t_0^\prime)),
\]
and therefore (\ref{Leftest}) and (\ref{Rightest}) imply the absurd
inequality $a_0<a_0$. This contradiction proves the claim.

\vskip.2cm From Claim 1 it follows that there is $t_1\in(t_0,+\infty)$
with the following properties:
\[\begin{split}
&u^*([t_0,t_1))\subset Q_0(x_0),\\
%t_1=\sup\{t>t_0:u^*([t_0,t))\subset Q_0(x_0)\},\\
&u(t_1)\in D_0.
\end{split}\]
Set $x_{0,1}=\pi(u^*(t_1))$ and $\delta_1=\vert
u^*(t_1)-x_{0,1}\vert$.
Since $h(x_0,0)=h_\xi(x_0,0)=0$ and the radius
$\rho_\delta=(k^2-\frac{1}{4})^{\frac{1}{2}}\delta$ of $D_0$ is
proportional to $\delta$, we can assume that $\delta$ is so small that
the ratio $\frac{2\delta_1}{\delta}$ and $\frac{\vert x_{0,1} -
  x_0\vert}{\vert u^*(t_1)-x(x_0,(0,\frac{\delta}{2}))\vert}$ are near
$1$ so that we have
\[
\begin{split}
&\delta_1\leq\rho\delta,\;\;\text{for some}\;\;\rho<1,\\
&\vert x_{0,1}-x_0\vert\leq k\delta.
\end{split}
\]

%\noindent We also have
%\[
%t_1-t_0\leq k^\prime\delta^\frac{1}{2},\;\; k^\prime =
%\frac{32(k^2-\frac{1}{4})^\frac{1}{2}}{3c^\frac{1}{2}}.
%\]
%This follows from
%\[
%\begin{split}
%&(t_1-t_0)\frac{c}{4}\delta \leq\mathcal{A}(u^*,(t_0,t_1))
%=\mathcal{J}(u^*,(t_0,t_1)) \\
%%=\sqrt{2}\int_{t_0}^{t_1}\sqrt{U(u^*(t))}\vert\dot{u^*}(t)\vert dt
%&\hskip -0.5cm \leq 4c^\frac{1}{2}\frac{\vert
%  u^*(t_1)-u^*(t_0)\vert}{\delta} \int_{\frac{\delta}{2}}^\delta
%s^\frac{1}{2}ds \leq 4c^\frac{1}{2}(k^2 - \frac{1}{4})^\frac{1}{2}
%\frac{2}{3}(1 - \frac{1}{2^\frac{3}{2}})\delta^\frac{3}{2}.
%\end{split}
%\]

\noindent We also have
\[
t_1-t_0\leq k^\prime\delta^\frac{1}{2},\;\; k^\prime =
\frac{8k}{c^\frac{1}{2}}.
\]
This follows from
\[
\begin{split}
&(t_1-t_0)\frac{c}{4}\delta \leq\mathcal{A}(u^*,(t_0,t_1))
=\mathcal{J}(u^*,(t_0,t_1)) \\
&=\sqrt{2}\int_{t_0}^{t_1}\sqrt{U(u^*(t))}\vert\dot{u^*}(t)\vert dt
\leq 2\sqrt{c\delta}\vert u^*(t_1)-u^*(t_0)\vert \leq 2 c^{\frac{1}{2}} k \delta^{\frac{3}{2}}.
\end{split}
\]
where we used (\ref{P2}) to estimate $\mathcal{J}$ on the segment
joining $u^*(t_0)$ with $u^*(t_1)$.

\noindent We have $u^*(t_1)=x_{0,1}+\delta_1e_n(x_{0,1})$ and we can apply Claim
1 to deduce that there exists $t_2>t_1$ such that
\[
\begin{split}
&u^*([t_1,t_2))\subset Q_1(x_{0,1}),\\ &u^*(t_2)\in D_1,
\end{split}
\]
where $Q_1$ and $D_1$ are defined as $Q_0$ and $D_0$ with $\delta_1$
and $x(x_{0,1},(\xi,s))$ instead of $\delta$ and
$x(x_0,(\xi,s))$. Therefore an induction argument yields sequences
$\{t_j\}$, $\{x_{0,j}\}$, $\{\delta_j\}$ and $\{Q_j(x_{0,j})\}$ such
that
\begin{equation}\label{Sequence}
\begin{split}
&u^*([t_j,t_{j+1}))\subset Q_j(x_{0,j}), \quad x_{0,j}=\pi(u^*(t_j)),\\
&\delta_{j+1}\leq\rho\delta_j\leq\rho^{j+1}\delta,\\
&\vert x_{0,j+1}-x_{0,j}\vert\leq k\delta_j\leq k\rho^j\delta,\\
&(t_{j+1}-t_j)\leq k^\prime\delta_j^{1/2}\leq k^\prime\rho^{j/2}\delta^{1/2},\\
&u^*(t_j)=x_{0,j}+\delta_je_n(x_{0,j}) \in D_j.
\end{split}
\end{equation}
We can also assume that $Q_j(x_{0,j})\subset\Omega\cap B_r(x_0)$,
 for all $j\in\N$.
%$j=1,\ldots$.
This follows from $\vert u^*(t_{j+1})-u^*(t_j)\vert\leq
k\delta_j\leq k\rho^j\delta$.

\noindent From (\ref{Sequence}) we obtain that
%, in contradiction with the assumption $T_+=+\infty$,
there exists $T$ with
$t_0<T\leq\frac{k^\prime\delta^\frac{1}{2}}{1-\rho^\frac{1}{2}}$ such
that
\[
\begin{split}
  &u^*(T)=\lim_{t\rightarrow T}u^*(t)=\lim_{j\rightarrow+\infty}
  x_{0,j}\in \partial\Omega\setminus P,\\
&\vert u^*(T)-x_0\vert\leq\frac{k\delta}{1-\rho}.
\end{split}
\]
This contradicts the existence of the sequence $\{\tau_j\}$, with $\lim_{j\to\infty}\tau_j=+\infty$, appearing in
(\ref{tauSequence}) and establishes (\ref{LimP}).  The proof
 of the lemma is complete.
\end{proof}
%==================================================

We continue by showing (\ref{LimP}) contradicts (\ref{IV0}).

%To complete the proof of Theorem \ref{main0} it remain to show that
%(\ref{LimP}) contradicts the minimality of $u^*$ if $\Gamma_+$ has
%positive diameter.

\begin{lemma}\label{T<}
Assume that $\Gamma_+$ has positive diameter. Then
\[T_+<+\infty.\]
An analogous statement applies to $\Gamma_-$ and $T_-$.
\end{lemma}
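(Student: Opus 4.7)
The plan is to argue by contradiction. Suppose $T_+=+\infty$. By Lemma~\ref{Convp}, $u^*(t)\to p$ as $t\to+\infty$ for some $p\in P\cap\Gamma_+$, and by $\mathbf{H}_1$ the Hessian $H:=U_{xx}(p)$ is nondegenerate. If $H$ were positive definite, $p$ would be an isolated zero of $U$ and hence an isolated point of $\partial\Omega$, contradicting that the connected set $\Gamma_+\ni p$ has positive diameter; likewise $H$ must have at least one positive eigenvalue since $p\in\overline\Omega\setminus\Omega$, so $H$ has mixed signature. Because $\Gamma_+$ is connected of positive diameter and $P$ is finite, points of $\Gamma_+\setminus P$ accumulate at $p$. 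The goal is to build an admissible $\tilde u\in\mathcal U$ with $\mathcal A(\tilde u)<a_0$, which together with (\ref{IV0}) contradicts (\ref{a0bound}).

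In coordinates $(X,Y)\in\R^k\oplus\R^{n-k}$ centered at $p$ and diagonalizing $H$, one has $U(p+x)=\tfrac{1}{2}\langle X,H_+X\rangle-\tfrac{1}{2}\langle Y,H_-Y\rangle+O(|x|^3)$ with $H_\pm>0$ on their respective subspaces, so locally $\Omega$ is the region where the quadratic form $Q$ is positive. Write $u^*(t_0)-p=(X_0,Y_0)$; for $t_0$ large, $|X_0|+|Y_0|$ is small. Fix a parameter $\alpha\in(0,1)$ (to be chosen small) and pick $Y_+$ in the negative eigenspace with $\langle Y_+,H_-Y_+\rangle=\alpha^2\langle X_0,H_+X_0\rangle$, so that $(\alpha X_0,Y_+)$ lies on the null cone of $Q$; by the cone structure of $\Gamma_+$ near $p$ one finds a point $x_+\in\Gamma_+\setminus P$ at distance $o(|X_0|)$ from $p+(\alpha X_0,Y_+)$. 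Let $\gamma$ be the Euclidean segment from $u^*(t_0)$ to $x_+$: for $\alpha\in(0,1)$ and $|X_0|$ small, convexity of $\{Q\geq 0\}$ in the $X$-directions keeps $\gamma\subset\overline\Omega$. Reparametrize $\gamma$ in time via Lemma~\ref{JacobiL} so that $\mathcal A(\gamma)=\mathcal J(\gamma)$.

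The crux is the strict inequality $\mathcal J(\gamma)<\mathcal A(u^*,(t_0,+\infty))$. By energy conservation, $\mathcal A(u^*,(t_0,+\infty))=\mathcal J(u^*,(t_0,+\infty))$, and an asymptotic analysis of the stable-manifold approach of $(u^*,\dot u^*)$ to $(p,0)$ in the linearized Hamiltonian flow gives $\mathcal A(u^*,(t_0,+\infty))=\tfrac{1}{2}\langle X_0,\sqrt{H_+}\,X_0\rangle+O(|X_0|^3)$. A direct computation of $\mathcal J(\gamma)$ in the quadratic model of $U$ shows the two quantities agree to leading order as $\alpha\to 0$, but the next correction is of size $|X_0|^2\alpha^2\log(1/\alpha)$ with a strictly negative coefficient; the cubic Morse remainder contributes only $O(|X_0|^3)$ to $\mathcal J(\gamma)$ and is absorbed by this savings once $\alpha$ is fixed small and then $t_0$ is taken large. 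Geometrically, the savings reflects that the ``broken path'' obtained by joining the asymptotic approach of $u^*$ to $p$ with a zero--$\mathcal J$-cost arc in $\Gamma_+$ from $p$ to $x_+$ has a genuine corner at the saddle-center $p$ and hence cannot be a Jacobi-minimizing curve.

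Concatenating $u^*|_{[T_-,t_0]}$ with $\gamma$ and translating time so that $U(\tilde u(0))=U_0$ produces $\tilde u\in\mathcal U$ with $\mathcal A(\tilde u)=\mathcal A(u^*,(T_-,t_0))+\mathcal J(\gamma)<\mathcal A(u^*,(T_-,t_0))+\mathcal A(u^*,(t_0,+\infty))=a_0$, contradicting (\ref{a0bound}). The same argument applied at the left endpoint yields $T_->-\infty$ when $\Gamma_-$ has positive diameter. The main obstacle is the Morse-level Jacobi-length comparison producing the strict inequality: because the leading $|X_0|^2$ contributions on the two sides coincide at $\alpha=0$, the savings must be read off from the subleading $\alpha^2\log(1/\alpha)$ correction in the expansion of $\mathcal J(\gamma)$.
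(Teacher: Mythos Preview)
Your overall architecture matches the paper's: assume $T_+=+\infty$, invoke Lemma~\ref{Convp} to get $u^*(t)\to p\in P\cap\Gamma_+$, and then build a competitor that reaches $\Gamma_+\setminus P$ with strictly smaller action. The construction of the competitor, however, is genuinely different. The paper perturbs the tangent-to-stable-manifold ray $\bar u(s)=s\eta$ by a scalar function $g$ in a single \emph{negative}-eigenvalue direction $e_1$, choosing $g$ from an explicit first-order ODE so that $\mathcal J_V(\bar u+ge_1,(1,s_0))=\mathcal J_V(\bar u,(1,s_0))$ exactly, and then bends $g$ steeply back to $0$ on $(s_0,s_1]$; the savings comes from comparing $\mathcal J_V(\bar u,(0,1))$ with the cost of the steep return. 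Your competitor is instead a straight segment to a nearby boundary point. Both yield the same conclusion in the quadratic model, and your asserted $\alpha^2\log(1/\alpha)$ savings for the segment is in fact correct (it arises from the exact integral $\int_0^1\sqrt{(1-\tau)(1-(1-2\alpha)\tau)}\,d\tau$, whose expansion contains a $\tfrac{1}{2}\alpha^2\ln\alpha$ term). The paper's ODE device buys an elementary, fully explicit inequality (\ref{ineqeq}) with no asymptotic expansion; your segment buys geometric simplicity at the price of that expansion.

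There is, however, a genuine gap in your plan that precedes the comparison. Your formula $\mathcal A(u^*,(t_0,+\infty))=\tfrac12\langle X_0,\sqrt{H_+}\,X_0\rangle+O(|X_0|^3)$, and in particular the implicit claim that $Y_0=O(|X_0|^2)$, requires knowing that $(u^*,\dot u^*)$ lies on the \emph{stable} manifold of the equilibrium $(p,0)$ for the Hamiltonian flow. Lemma~\ref{Convp} gives only $u^*(t)\to p$; it does not rule out that $(u^*,\dot u^*)$ shadows a nontrivial orbit on the center manifold, in which case your action asymptotic fails. The paper handles this with a center-manifold argument: any bounded forward orbit is exponentially asymptotic to some orbit $(p^c,q^c)$ on a local center manifold $W^c$; since the projection of $W^c$ to configuration space is tangent to the negative-eigenspace $S_0^c$, a nonzero $(p^c,q^c)$ would force $u^*(t)\notin\Omega$ for large $t$; hence $(p^c,q^c)\equiv 0$ and the orbit is on $W^s$, yielding the parametrization $\mathfrak u^*(s)=s\eta+z(s)$ with $|z(s)|\le c s^2$. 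You need this (or an equivalent argument) before your comparison can start. A secondary point: your sentence about ``convexity of $\{Q\ge 0\}$ in the $X$-directions'' keeping the segment in $\overline\Omega$ is correct for the quadratic model but does not directly transfer to $U=V+W$ near the endpoint $x_+$, where $V(\gamma(\tau))\sim\alpha(1-\tau)|X_0|^2$ can be dominated by the $O(|X_0|^3)$ remainder; as in the paper, this is best handled by first proving the strict inequality for $V$ and then passing to $U$ by an $\epsilon$-rescaling, rather than by working directly in $\Omega$.
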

\begin{proof}
From Lemma~\ref{Convp}, if $T_+=+\infty$ there exists $p\in P$ such
that $\lim_{t\to+\infty}u^*(t) = p$.  We use a local argument to show
that this is impossible if $\Gamma_+$ has positive diameter.
By a suitable change of variable we can
assume that $p=0$ and that, in a neighborhood of $0\in\R^n$, $U$ reads
\[U(u)=V(u)+W(u),\]
where $V$ is the quadratic part of $U$:
\begin{equation}\label{Quadratic}
V(u)=\frac{1}{2}\Bigl(-\sum_{i=1}^m\lambda_i^2u_i^2 +
\sum_{i=m+1}^n\lambda_i^2u_i^2\Bigr), \qquad \lambda_i>0
\end{equation} and $W$ satisfies,
\begin{equation}\label{W}
\vert W(u)\vert\leq C\vert u\vert^3, \quad \vert W_x(u)\vert\leq C\vert
u\vert^2, \quad \vert W_{xx}(u)\vert\leq C\vert u\vert.
\end{equation}
Consider the Hamiltonian system with
\[
H(p,q) = \frac{1}{2}\vert p\vert^2 - U(q),\quad p\in\R^n,\ 
q\in\Omega\subset\R^n.
\]
For this system the origin of $\R^{2n}$ is an
equilibrium point that corresponds to the critical point $p=0$ of
$U$.
Set $D = \text{diag}(-\lambda_1^2,
\ldots,-\lambda_m^2,\lambda_{m+1}^2, \ldots,\lambda_n^2)$.
The eigenvalues of the symplectic matrix
\[
\left(\begin{array}{cc}
0&D\\
I&0\end{array}\right)
\]
are
\[
\begin{split}
&-\lambda_i,\;\;i=m+1,\ldots,n\\
&\phantom{-\;\,}\lambda_i,\;\;i=m+1,\ldots,n\\
&\pm i\lambda_i,\;\;i=1,\ldots,m.
\end{split}
\]
Let
%$(\varepsilon_1,0),\ldots,(\varepsilon_n,0),
$(e_1,0),\ldots,(e_n,0)$, $(0,e_1),\ldots,(0,e_n)$ be the basis of
$\R^{2n}$ defined by $e_j=(\delta_{j1},\ldots,\delta_{jn})$, where
$\delta_{ji}$ is Kronecker's delta.
The stable $S^s$, unstable $S^u$ and center $S^c$ subspaces invariant
under the flow of the linearized Hamiltonian system at $0\in\R^{2n}$
are
\[
\begin{split}
&S^s=\text{span}\{(-\lambda_j e_j,e_j)\}_{j=m+1}^n,\\
&S^u=\text{span}\{(\lambda_j e_j,e_j)\}_{j=m+1}^n,\\
&S^c=\text{span}\{(e_j,0), (0,e_j)\}_{j=1}^m.
\end{split}
\]
From (\ref{LimP}) and (\ref{EnergConserv}) we have
\[
\lim_{t\rightarrow+\infty}(\dot{u}^*(t),u^*(t))=0\in\R^{2n}.
\]
Let $W^s$ and $W^u$  be the local stable and unstable
manifold and let $W^c$ be a local center
manifold at $0\in\R^{2n}$. From the center manifold theorem
\cite{B},
\cite{W}, there is a constant $\lambda_0>0$ such that, for each
solution $(p(t),q(t))$ that remains in a neighborhood of $0\in\R^{2n}$
for positive time, there is a solution $(p^c(t),q^c(t))\in W^c$ that
satisfies
\begin{equation}\label{LimH0}
\vert(p(t),q(t))-(p^c(t),q^c(t))\vert=\text{O}(e^{-\lambda_0t}).
\end{equation}
Since $W^c$ is tangent to $S^c$ at $0\in\R^{2n}$, the projection
$W_0^c$ on the configuration space is tangent to
$S_0^c=\text{span}\{e_j\}_{j=1}^m$, which is the projection of $S^c$ on the
configuration space. Therefore, if $(p^c,q^c) \not\equiv 0$, given
$\gamma>0$, by (\ref{LimH0}) there is $t_\gamma$ such that
$d(q(t),S_0^c)\leq\gamma\vert q(t)\vert$, for $t\geq t_\gamma$. For
$\gamma$ small, this implies that $q(t)\not\in\Omega$ for $t\geq
t_\gamma$. It follows that $(p^c,q^c)\equiv 0$ and from (\ref{LimH0})
$(p(t),q(t))$ converges to zero exponentially. This is possible only
if $(p(t),q(t))\in W^s$ and, in turn, only if $q(t)\in W_0^s$, the
projection of $W^s$ on the configuration space. This argument leads to
the conclusion that the trajectory of $u^*$ in a neighborhood of $0$
is of the form
\begin{equation}\label{uForm}
u^*(t(s))=\mathfrak{u}^*(s) = s\eta+z(s),
\end{equation}
where
\[
\eta=\sum_{i=m+1}^n\eta_ie_i
\]
is a unit vector\footnote{Actually
  $\eta$ coincides with one of the eigenvectors of $U''(0)$.},
$s\in[0,s_0)$ for some $s_0>0$, and $z(s)$ satisfies
\begin{equation}\label{zeta}
 z(s)\cdot\eta=0,\quad \vert z(s)\vert\leq c\vert s\vert^2,
\quad\vert z^\prime(s)\vert\leq
c\vert s\vert
\end{equation}
for a positive constant $c$.

We are now in the position of constructing our local perturbation of
$u$.  We first discuss the case $U=V$, $z(s)=0$.  We set
\[
\bar{u}(s)=s\eta
\]
and, in some interval
%$[1,\tau_1]$,
$[1,s_1]$,
construct a
competing map $\bar{v}:[1,s_1]\rightarrow\R^n$,
%$\bar{v}:[1,\tau_1]\rightarrow\R^n$,
\[
\bar{v} = \bar{u} + g e_1,
\quad g:[1,s_1]\rightarrow\R,
\]
with the following
properties:
\begin{eqnarray}
&& V(\bar{v}(1))=0, \nonumber\\
&& \bar{v}(s_1)=\bar{u}(s_1),\nonumber\\
&&\mathcal{J}_V(\bar{v},[1,s_1])<\mathcal{J}_V(\bar{u},[0,s_1]).\label{iii}
\end{eqnarray}

The basic observation is that, if we move from $\bar{u}$ in the
direction of one of the eigenvectors $e_1,\ldots,e_m$ corresponding to
negative eigenvalues of the Hessian of $V$, the potential $V$
decreases and therefore, for each $s_0\in(1,s_1)$ we can define the
function $g$ in the interval $[1,s_0]$ so that
\begin{equation}\label{EqualJ}
%  \mathcal{J}_V(\bar{u}+ge_1,[1,\tau_0])=\mathcal{J}_V(\bar{u},
%          [1,\tau_0]).
    \mathcal{J}_V(\bar{u}+ge_1, (1,s_0))=\mathcal{J}_V(\bar{u},
          (1,s_0)).
\end{equation}
Indeed it suffices to impose that
%$g:(1,\tau_0]\rightarrow\R$
$g:(1,s_0]\rightarrow\R$
  satisfies the condition
\[
%\sqrt{V(\bar{u}(\tau))} = \sqrt{1+\dot{g}^2(\tau)}
%\sqrt{V(\bar{u}(\tau)+g(\tau)e_1)},\;\;\tau\in(1,\tau_0].
  \sqrt{V(\bar{u}(s))} = \sqrt{1+g'^2(s)}
\sqrt{V(\bar{u}(s)+g(s)e_1)},\;\;s\in(1,s_0].
\]
According with this condition we take $g$  as the
solution of the problem
\begin{equation}\label{gEquat}
\left\{\begin{array}{l}{g}' =
\displaystyle -\frac{\lambda_1g}{\sqrt{s^2\lambda_\eta^2-\lambda_1^2g^2}} =
-\frac{\frac{\lambda_1g}{s\lambda_\eta}}{\sqrt{1 -
    \frac{\lambda_1^2g^2}{s^2\lambda_\eta^2}}}\\
g(1)=\frac{\lambda_\eta}{\lambda_1}\end{array}\right.,
\end{equation}
where we have used (\ref{Quadratic}) and set
%$\lambda_\eta^2=\sum_{m+1}^n\lambda_i^2\eta_i^2$.
\[
\lambda_\eta=\sqrt{\sum_{i=m+1}^n\lambda_i^2\eta_i^2}.
\]
Note that the initial condition in (\ref{gEquat}) implies
$V(\bar{v}(1))=0$. The solution $g$ of (\ref{gEquat}) is well defined
in spite of the fact that the right hand side tends to
$-\infty$ as $s\rightarrow 1$.
Since $g$ defined by (\ref{gEquat}) is positive for $s\in[1,+\infty)$,
%The function $g$ defined by (\ref{gEquat}) is positive for each $s\geq 1$. 
to satisfy the condition $\bar{v}(s_1)=\bar{u}(s_1)$, we give a
suitable definition of $g$ in the interval $[s_0,s_1]$ in order that
$g(s_1)=0$. Choose a number $\alpha\in(0,1)$ and extend $g$ with
continuity to the interval $[s_0,s_1]$ by imposing that
\begin{equation}\label{alphaCond}
\sqrt{V(\bar{u}(s))} = \alpha\sqrt{1 + g'^2(s)}
\sqrt{V(\bar{u}(s)+g(s)e_1)},\;\;s\in(s_0,s_1].
\end{equation}

Therefore, in the interval $(s_0,s_1]$, we define $g$ by
\begin{equation}\label{gEquat1}
{g}' = -\frac{1}{\alpha}\sqrt{\frac{1 - \alpha^2 +
    \alpha^2\frac{\lambda_1^2g^2}{s^2\lambda_\eta^2}} {1 -
    \frac{\lambda_1^2g^2}{s^2\lambda_\eta^2}}}\leq -
\frac{\sqrt{1-\alpha^2}}{\alpha}.
\end{equation}

\begin{figure}
%\psfragscanon
% \psfrag{O}{$O$}\psfrag{e1}{$e_1$}\psfrag{eta}{$\eta$}
% \psfrag{ubar}{$\bar{u}(s)$}\psfrag{vbar}{$\bar{v}(s)$}
% \psfrag{1}{$1$}\psfrag{t1}{$s_1$}\psfrag{t0}{$s_0$}\psfrag{V=0}{$V=0$}
% \centerline{\epsfig{figure=decrease_action.eps,width=8cm}}
% \psfragscanoff
%
\centerline{\includegraphics[width=6.6cm]{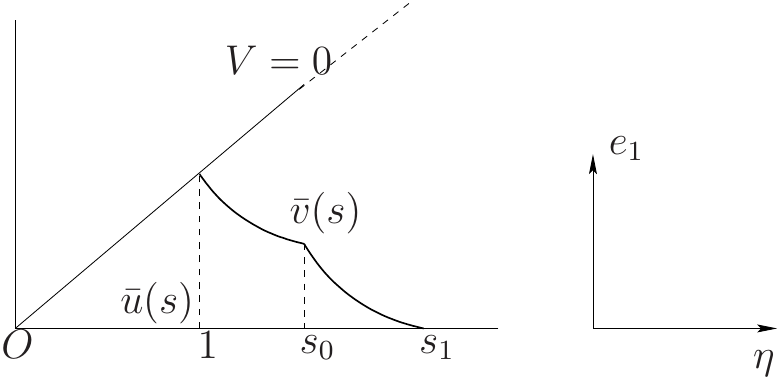}}
\caption{The maps $\bar{u}(s)$ and $\bar{v}(s)$.}
%   with $\mathcal{J}_V(\bar{u},(0,s_1)) > \mathcal{J}_V(\bar{v},(1,s_1))$.}
%\label{figdecrease}
\end{figure}

\noindent Since (\ref{alphaCond}) implies
\[
\mathcal{J}_V(\bar{v},[s_0,s_1]) =
\frac{1}{\alpha}\mathcal{J}_V(\bar{u},[s_0,s_1]),
\]
from (\ref{EqualJ}) we see that $\bar{v}$ satisfies also the
requirement (\ref{iii}) above if we can choose $\alpha\in(0,1)$ and
$1<s_0<s_1$ in such a way that
\[
\mathcal{J}_V(\bar{u},(0,1)) >
\frac{1-\alpha}{\alpha}\mathcal{J}_V(\bar{u},(s_0,s_1)).
\]
Since (\ref{gEquat1}) implies $s_1<s_0+\frac{\alpha
  g(s_0)}{\sqrt{1-\alpha^2}}$ a sufficient condition for this is
\[
\mathcal{J}_V(\bar{u},(0,1)) >
\frac{1-\alpha}{\alpha}\mathcal{J}_V\Bigl(\bar{u},
     \Bigl(s_0,s_0+\frac{\alpha g(s_0)}{\sqrt{1-\alpha^2}}\Bigr)\Bigr),
\]
or equivalently
\begin{equation}
1 > \frac{1-\alpha}{\alpha}\Big(\Bigl(s_0+\frac{\alpha
  g(s_0)}{\sqrt{1-\alpha^2}}\Bigr)^2 - s_0^2\Big) =
2s_0g(s_0)\sqrt{\frac{1-\alpha}{1+\alpha}} + \frac{\alpha
  g^2(s_0)}{1+\alpha}.
\label{ineqeq}
\end{equation}
By a proper choice of $s_0$ and $\alpha$ the right hand side of (\ref{ineqeq})
can be made as small as we like. For instance we can
fix $s_0$ so that $g(s_0)\leq\frac{1}{4}$ and then choose
$\alpha$ in such a way that
$\frac{1}{2}s_0\sqrt{\frac{1-\alpha}{1+\alpha}}\leq\frac{1}{4}$ and
conclude that (\ref{iii}) holds.

\smallbreak
Next we use the function $g$
%for defining
 to define a comparison map $v$ that coincides with
$u^*$ outside an $\epsilon$-neighborhood of $0$ and show that the
assumption that the trajectory of $u^*$ ends up in some $p\in P$ must
be rejected. For small $\epsilon>0$ we define
\begin{equation}\label{vDef}
v(\epsilon s) = \epsilon s\eta+z(\epsilon s) + \epsilon
g(s-\sigma)e_1,\;\;s\in[1+\sigma,s_1+\sigma],
\end{equation}
where $\sigma=\sigma(\epsilon)$ is determined by the condition
\[
U(v(\epsilon(1+\sigma)))=0,
\]
which, using (\ref{Quadratic}), (\ref{W}), (\ref{zeta}) and
$g(1)=\frac{\lambda_\eta}{\lambda_1}$, after dividing by $\epsilon^2$,
becomes
\begin{equation}\label{sigma}
\frac{1}{2}\lambda_\eta^2((1+\sigma)^2-1)=\epsilon f(\sigma,\epsilon),
\end{equation}
where $f(\sigma,\epsilon)$ is a smooth bounded function defined in a
neighborhood of $(0,0)$. For small $\epsilon>0$, there is a unique
solution $\sigma(\epsilon)=\text{O}(\epsilon)$ of (\ref{sigma}). Note
also that (\ref{vDef}) implies that
\[
v(\epsilon(s_1+\sigma))=\mathfrak{u}^*(\epsilon(s_1+\sigma)).
\]
We now conclude by showing that, for $\epsilon>0$ small, it results
\begin{equation}\label{Jineq}
\mathcal{J}_U(\mathfrak{u}^*(\epsilon\cdot),(0,s_1+\sigma)) >
\mathcal{J}_U(v(\epsilon\cdot),(1+\sigma,s_1+\sigma)).
\end{equation}
From (\ref{uForm}) and (\ref{vDef}) we have
\begin{equation}\label{Der}
\lim_{\epsilon\rightarrow 0^+}\epsilon^{-1}
\Bigl\vert\frac{d}{ds}\mathfrak{u}^*(\epsilon s)\Bigr\vert
=1,\;\;\;
\lim_{\epsilon\rightarrow 0^+}\epsilon^{-1}
\Bigl\vert\frac{d}{ds}v(\epsilon s)\Bigr\vert
= \sqrt{1+g'^2(s)},
\end{equation}
and, using also (\ref{W}) and $\sigma=\text{O}(\epsilon)$,

\begin{equation}\label{Ulim}
\begin{split}
&\lim_{\epsilon\rightarrow 0^+}\epsilon^{-2}U(\mathfrak{u}^*(\epsilon s)) =
  V(\bar{u}(s)),\;\;s\in(0,s_1),\\
&\lim_{\epsilon\rightarrow 0^+}\epsilon^{-2}U(v(\epsilon s)) =
  V(\bar{v}(s)),\;\;s\in(1,s_1)
\end{split}
\end{equation}
uniformly in compact intervals.

The limits (\ref{Der}) and (\ref{Ulim}) imply
\[
\begin{split}
&\lim_{\epsilon\rightarrow
    0^+}\epsilon^{-2}\mathcal{J}_U(\mathfrak{u}^*(\epsilon\cdot),
  (0,s_1+\sigma)) = \lim_{\epsilon\rightarrow
    0^+}\sqrt{2}\int_0^{s_1+\sigma}
  \sqrt{\epsilon^{-2}U(\mathfrak{u}^*(\epsilon s))}\epsilon^{-1}
  \Bigl\vert\frac{d}{ds}\mathfrak{u}^*(\epsilon s)\Bigr\vert
  ds,\\
&=\sqrt{2}\int_0^{s_1} \sqrt{V(\bar{u}(s))}
  ds = \mathcal{J}_V(\bar{u},(0,s_1))\\
&\lim_{\epsilon\rightarrow
    0^+}\epsilon^{-2}\mathcal{J}_U(v(\epsilon\cdot),(1+\sigma,s_1+\sigma))
  =\lim_{\epsilon\rightarrow
    0^+}\sqrt{2}\int_{1+\sigma}^{s_1+\sigma}
  \sqrt{\epsilon^{-2}U(v(\epsilon s))}\epsilon^{-1}
  \Bigl\vert\frac{d}{ds}v(\epsilon s)\Bigr\vert ds,\\
&=\sqrt{2}\int_1^{s_1}
  \sqrt{V(\bar{v}(s))}\sqrt{1+g'^2(s)} ds =
  \mathcal{J}_V(\bar{v},(1,s_1)).
\end{split}
\]
This and (iii) above imply that, indeed, the inequality (\ref{Jineq})
holds for small $\epsilon>0$. The proof is complete.
\end{proof}
\noindent
We can now complete the proof of Theorem \ref{main0}. We show that the
map $u^*:(T_-,T_+)\rightarrow\R^n$ possesses all the required
properties. The fact that $u^*$ satisfies (\ref{Newton}) and
(\ref{EnergConserv}) follows from Lemma \ref{FirstInt}. Lemma
\ref{timelemma} implies (\ref{d-0}) and, if $T_->-\infty$, also
(\ref{x-}). The fact that $x_-\in\Gamma_-\setminus P$ is a consequence
of Lemma \ref{limd0} and implies that $\Gamma_-$ has positive
diameter. Viceversa, if $\Gamma_-$ has positive diameter, Lemmas
\ref{Convp} and \ref{T<} imply that $T_->-\infty$ and that (\ref{x-})
holds for some $x_-\in\Gamma_-\setminus P$. The proof of Theorem
\ref{main0} is complete.

\begin{remark}
From Theorem~\ref{main0} it follows that if $N$ is even then there are
at least $N/2$ distinct orbits connecting different elements of
$\{\Gamma_1,\ldots,\Gamma_N\}$.  If $N$ is odd there are at least
$(N+1)/2$.
Simple examples show that, given distinct $\Gamma_i, \Gamma_j\in
\{\Gamma_1,\ldots,\Gamma_N\}$, an orbit
connecting them does not always exist.
Let
\[
  \mathcal{U}_{ij} = \{ u\in
  W^{1,2}((T_-^u,T_+^u);\R^n):u((T_-^u,T_+^u))\subset\Omega,
    u(T_-^u)\in\Gamma_i, u(T_+^u)\in\Gamma_j
  \}
\]
with $i\neq j$ and
\[
d_{ij} = \inf_{u\in\mathcal{U}_{ij}}\mathcal{A}(u,(T_-^u,T_+^u)).
\]
An orbit connecting $\Gamma_i$ and $\Gamma_j$ exists if
\[
d_{ij} < d_{ik} + d_{kj}, \quad \forall k\neq i,j.
\]
%In fact, in this case, we can apply a minimization argument in the sets
%\[
%\{ u\in\mathcal{U}_{ij}:\mathcal{A}(u)<d_{ij}+\epsilon\},
%\]
%with $\epsilon>0$ such that $d_{ij}+\epsilon
%<\min_{k\in\{1,\ldots,N\}}(d_{ik}+d_{kj})$.
\end{remark}

\smallbreak The proof of Theorem \ref{main-0} uses, with obvious
modifications, the same arguments as in
the proof of Theorem \ref{main0} to characterize $u^*$ as the limit of
a minimizing sequence $\{u_j\}$ of the action functional
\begin{equation}
\mathcal{A}(u,(0,T^u)) = \int_0^{T^u} \Bigl(\frac{1}{2}|\dot{u}(t)|^2
+ U(u(t))\Bigr)dt.
%\label{action}
\end{equation}
in the set
%of admissible maps
\begin{equation}\label{SymmU}
\mathcal{U} = \{ u\in W^{1,2}((0,T^u);\R^n) :
0<T_+^u <+\infty,\ u(0) = 0, \ u([0,T_+^u))\subset\Omega, \ u(T_+^u)
  \in \partial\Omega \}.
\end{equation}

\begin{remark}\label{tempidiversi}
%Simple examples have $T^*<T_\infty$. Take $U=1-\vert u\vert^2$,
%$u\in\R^2$. Then $u:[0,\frac{\pi}{2}]\rightarrow\R^2$ determined by
%(\ref{EnergConserv}) and $u([0,\frac{\pi}{2}])=\{s(1,0):s\in[0,1]\}$
%is a minimizer in $\mathcal{U}$. Define
%$u_\epsilon:[0,T_{u_\epsilon}]\rightarrow\R^2$ to be the map
%determined by (\ref{EnergConserv}) and by the fact that
%$u([0,T_{u_\epsilon}])$ is the union of
%$\{s(1,0):s\in[0,1-\epsilon]\}$, of an arc of angle $\frac{\pi}{2}$
%and radius $1-\epsilon$ and of $\{s(0,1):s\in(1-\epsilon,1]\}$. In
%    this case $T^*=\frac{\pi}{2}$, and $T_\infty=+\infty$.
%
  In the symmetric case of Theorem~\ref{main-0} it is easy to
  construct an example with $T_+<T_+^\infty$.  For $U(x)=1-\vert
  x\vert^2$, $x\in\R^2$, the solution $u:[0,{\pi}/{2}]\rightarrow\R^2$
  of (\ref{Newton}) determined by (\ref{EnergConserv}) and
  $u([0,{\pi}/{2}])=\{(s,0):s\in[0,1]\}$ is a minimizer of
  $\mathcal{A}$ in $\mathcal{U}$. For $\eps$ small, let
  $t_\eps=\arcsin(1-\eps)$ and define $u_\epsilon: [0,T^{u_\epsilon}]
  \rightarrow \R^2$ as the map determined by (\ref{EnergConserv}),
  $u_\epsilon([0,t_\epsilon]) = \{(s,0): s\in[0,1-\eps)\}$ and
    $u_\eps((t_\epsilon, T^{u_\eps}]) =
  \{(1-\eps,s):s\in(0,\sqrt{2\eps-\eps^2}]\}$. In this case
$T_+={\pi}/{2}$ and $T_+^\infty=3\pi/4$.
\end{remark}

% ---------------------------------------------------------
\subsection{On the existence of heteroclinic connections}
\label{s:heteroclinic}

Corollary \ref{Hetero} states the existence of heteroclinic
connections under the assumptions of Theorem~\ref{main0} and, in
particular, that $U\in C^2$.  Actually, by examining the proof of
Theorem~\ref{main0} we can establish an existence result under weaker
hypotheses. In the special case $\partial\Omega=P$, $\#P\geq 2$,
given $p_-\in P$, the set $\mathcal{U}$ defined in (\ref{Ad}) takes
the form
\[
\begin{split}
\mathcal{U}=\bigl\{u\in W^{1,2}((T_-^u,T_+^u);\R^n):
        {-\infty< T_-^u < T_+^u< +\infty,}\hskip 2.2cm\\
        u((T_-^u,T_+^u))\subset\Omega,\ U(u(0))=U_0,\
        u(T_-^u)=p_-,\
        u(T_+^u)\in P\setminus\{p_-\}\}.
\end{split}
\]
In this section we slightly enlarge the set $\mathcal{U}$ by allowing
$T_\pm^u=\pm\infty$ and consider the admissible set
\[
\begin{split}
\widetilde{\mathcal{U}}=\bigl\{u\in W^{1,2}_{loc}((T_-^u,T_+^u);\R^n):
        {-\infty\leq T_-^u < T_+^u\leq+\infty,}\hskip 2.2cm\\
        u((T_-^u,T_+^u))\subset\Omega,\ U(u(0))=U_0,\
        \lim_{t\rightarrow T_-^u}u(t)=p_-,\
        \lim_{t\rightarrow T_+^u}u(t)\in P\setminus\{p_-\}\}.
\end{split}
\]

\begin{proposition}\label{prosant}
Assume that $U$ is a non-negative continuous function, which vanishes in a
finite set $P$, $\#P\geq 2$, and satisfies
\[
\sqrt{U(x)}\geq\sigma(\vert x\vert),\;\; x\in\Omega,\;\;
  \vert x\vert\geq r_0
\]
for some $r_0>0$ and
a non-negative function $\sigma:[r_0,+\infty)\rightarrow\R$ such that
$\int_{r_0}^{+\infty}\sigma(r)dr=+\infty$.
 
Given $p_-\in P$ there is $p_+\in P\setminus\{p_-\}$ and a
  Lipschitz-continuous map $u^*:(T_-,T_+)\rightarrow\Omega$ that satisfies
  \eqref{EnergConserv} almost everywhere on $(T_-,T_+)$, 
\[
\lim_{t\rightarrow T_\pm}u^*(t)=p_\pm,
\]
and minimizes the action functional $\mathcal{A}$ on $\tilde{\mathcal{U}}$.
\end{proposition}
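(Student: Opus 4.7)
The plan is to mirror the proof of Theorem~\ref{main0}, dropping every appeal to $C^2$-regularity of $U$ and relying only on the variational characterization via Lemmas~\ref{JacobiL}, \ref{timelemma}, and \ref{FirstInt}. Since $\tilde{\mathcal{U}}\supset\mathcal{U}$ and a piecewise-linear competitor joining $p_-$ to any nearest $p_+\in P\setminus\{p_-\}$ through $\Omega$ produces a $u\in\mathcal{U}$ of finite action (exactly as in the construction of $\tilde u$ in the proof of Theorem~\ref{main0}), the infimum $\tilde a_0:=\inf_{\tilde{\mathcal{U}}}\mathcal{A}$ is finite. Fix a minimizing sequence $\{u_j\}\subset\tilde{\mathcal{U}}$ with $\mathcal{A}(u_j)\leq \tilde a_0+1$.

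By the reparametrization invariance of $\mathcal{J}$ and Lemma~\ref{JacobiL}, I would first reparametrize each $u_j$ (and suitably translate time so that the normalization $U(u_j(0))=U_0$ is preserved) so that $\frac{1}{2}|\dot u_j|^2=U(u_j)$ holds a.e.\ on $(T_-^{u_j},T_+^{u_j})$; after this step $\mathcal{A}(u_j)=\mathcal{J}(u_j)$, so the bound on the action is unchanged. The $L^\infty$ argument of (\ref{Bound}), which only uses continuity of $U$ and condition \eqref{sigmacond}, still yields $\|u_j\|_\infty\leq M$. Since $U$ is continuous, $\sup_{|x|\leq M}U(x)<+\infty$, so the identity above gives a uniform Lipschitz bound on $u_j$. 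Arzelà--Ascoli then provides a subsequence and a Lipschitz map $u^*:(T_-^\infty,T_+^\infty)\to\overline{\Omega}$ (with $-\infty\leq T_-^\infty<T_+^\infty\leq+\infty$) such that $u_j\to u^*$ uniformly on compact subsets and $u_j\rightharpoonup u^*$ in $W^{1,2}_{loc}$.

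Next I would replay Lemmas~\ref{timelemma} and~\ref{FirstInt}. Defining $T_\pm$ as in Lemma~\ref{timelemma}, the lower-semicontinuity plus cut-and-paste argument still gives $\mathcal{A}(u^*,(T_-,T_+))\leq\tilde a_0$, and the time-reparametrization computation of Lemma~\ref{FirstInt} (which uses only continuity of $U$ and the minimality of $u^*$ among competitors with the same endpoints, not the Euler--Lagrange equation) yields
\[
\int_{t_0}^{t_1}\Big(\tfrac{1}{2}|\dot u^*(t)|^2-U(u^*(t))\Big)\,dt=0
\]
for all $T_-<t_0<t_1<T_+$, hence \eqref{EnergConserv} a.e. The analog of Lemma~\ref{timelemma}(ii)-(iii) then gives $\lim_{t\to T_\pm}d(u^*(t),\partial\Omega)=0$; because $\partial\Omega=P$ is a \emph{finite} set of isolated points, the oscillation/measure argument from Lemma~\ref{timelemma}(iii) rules out $u^*(t)$ oscillating between distinct points of $P$ (this would force $\int U(u^*)dt=+\infty$), so in fact $\lim_{t\to T_\pm}u^*(t)=p_\pm$ for well-defined $p_\pm\in P$. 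The left endpoint is $p_-$ because $u_j(T_-^{u_j})=p_-$ and the time-translation normalization passes to the limit; the strict cut-and-paste inequality rules out $p_+=p_-$. Finally, $u^*\in\tilde{\mathcal{U}}$ together with $\mathcal{A}(u^*)\leq\tilde a_0$ gives minimality.

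The hardest point is upgrading the asymptotic condition $d(u^*(t),P)\to 0$ at $T_\pm$ to the existence of single-point limits $p_\pm$, since without $C^2$ we cannot invoke the local normal-form analysis of Lemma~\ref{Convp}. The substitute is the finiteness of $P$ combined with $\int U(u^*)dt<+\infty$: if $u^*$ accumulated at two distinct points of $P$ as $t\to T_+$, continuity of $U$ would force $U(u^*(t))\geq c>0$ on a set of intervals of total infinite length, contradicting the action bound. Beyond this, all remaining steps are routine adaptations of the arguments already used for Theorem~\ref{main0}.
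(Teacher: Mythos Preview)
Your argument is correct and closely parallels the paper's, with one organizational difference worth noting. The paper first proves the equality $\inf_{\mathcal{U}}\mathcal{A}=\inf_{\tilde{\mathcal{U}}}\mathcal{A}$ by the same cut-and-paste construction used in Lemma~\ref{timelemma}(i) (attach a short segment to $\partial\Omega$ at small cost), and then takes the minimizing sequence in $\mathcal{U}$, where each $T_\pm^{u_j}$ is finite and the compactness machinery already developed for Theorem~\ref{main0} applies verbatim; Lipschitz continuity of $u^*$ is derived only at the end, as a consequence of \eqref{EnergConserv} a.e.\ together with boundedness of $U$ on the range of $u^*$. You instead work directly in $\tilde{\mathcal{U}}$ and reparametrize each $u_j$ so that the energy relation holds along the sequence, obtaining a uniform Lipschitz bound upfront. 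This is a legitimate alternative and buys slightly cleaner a~priori control of the sequence, though the energy reparametrization of a generic $W^{1,2}_{loc}$ curve needs a little care (one should first pass to arc length and then solve $ds/dt=\sqrt{2U}$); the paper's route sidesteps this by relying only on the H\"older estimate \eqref{Holder}. The remaining ingredients---the time-rescaling computation of Lemma~\ref{FirstInt} (which indeed uses only continuity of $U$) to obtain \eqref{EnergConserv} a.e., and the single-point-limit argument via finiteness of $P$ and $\int U(u^*)\,dt<\infty$---are the same in both proofs. One point you state briefly deserves a line of justification: that the left limit is $p_-$ rather than some other $p'\in P$. This follows by the same ``extra cost'' argument you invoke for $p_+\neq p_-$: if $u^*(T_-)=p'\neq p_-$, then for large $j$ the portion of $u_j$ before it first approaches $p'$ already furnishes (after a short extension) an admissible competitor reaching $P\setminus\{p_-\}$, while the remainder of $u_j$ still carries Jacobi cost bounded below (it must climb back to $U=U_0$), contradicting minimality.
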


\begin{proof}
We begin by showing that
\begin{equation}\label{=}
a_0=\inf_{u\in\mathcal{U}}\mathcal{A}=\inf_{u\in\tilde{\mathcal{U}}}\mathcal{A}=\tilde{a}_0.
\end{equation}
Since $\mathcal{U}\subset\tilde{\mathcal{U}}$ we have
$a_0\geq\tilde{a}_0$. On the other hand arguing as in the proof of
Lemma \ref{timelemma}, if $T_+-T_-=+\infty$, given a small number
$\epsilon>0$, we can construct a map $u_\epsilon\in\mathcal{U}$ that
satisfies
\[
a_0\leq\mathcal{A}(u_\epsilon,(T_-^{u_\epsilon},T_+^{u_\epsilon}))\leq\mathcal{A}(u,(T_-^u,T_+^u))
+\eta_\epsilon
\]
where $\eta_\epsilon\rightarrow 0$ as $\epsilon\rightarrow 0$. This
implies $a_0\leq\tilde{a}_0$ and establishes (\ref{=}). It follows
that we can proceed as in the proof of Theorem \ref{main0} and define
$u^*\in\tilde{\mathcal{U}}$ as the limit of a minimizing sequence
$\{u_j\}\subset\mathcal{U}$. The arguments in the proof of Lemma
\ref{timelemma} show that (\ref{IV0}) holds. It remain to show that
$u^*$ is Lipschitz-continuous. Looking at the proof of Lemma
\ref{FirstInt} we see that the continuity of $U$ is sufficient for
establishing that (\ref{EnergConserv}) holds almost everywhere on
$(T_-,T_+)$, and the Lipschitz character of $u^*$ follows. The proof
is complete.
\end{proof}
\begin{remark}
Without further information on the behavior of $U$ in a neighborhood
of $p_\pm$ nothing can be said on $T_\pm$ being finite or infinite and
it is easy to construct examples to show that all possible
combinations are possible. As shown in Lemma \ref{limd0} a sufficient
condition for $T_\pm=\pm\infty$ is that, in a neighborhood of
$p=p_\pm$, $U(x)$ is bounded by a function of the form $c\vert
x-p\vert^2$, $c>0$. $U$ of class $C^1$ is a sufficient condition in
order that $u^*$ is of class $C^2$ and satisfies (\ref{Newton}).
\end{remark}

%\begin{itemize}
%\item ** sotto quali condizioni i tempi sono finiti o infiniti **
%\item ** trovare condizioni minime su $U$ perche' $u^*$ soddisfi le
%  equazioni del moto q.o. **
%\end{itemize}

% ==================================================================
\section{Examples}
In this section we show a few simple applications of Theorems
\ref{main0} and \ref{main-0}.

\noindent Our first application describes a class of potentials with
the property that, in spite of the existence of possibly infinitely
many critical values, (\ref{Newton}) has a nontrivial periodic orbit
on any energy level.
\begin{proposition}\label{allEnerg}
Assume that $U:\R^n\rightarrow\R$ satisfies
\[\begin{split}
&U(-x)=U(x),\;\;x\in\R^n,\\
%&U(x)\leq-\phi(\vert x\vert),
&U(0) = 0, \ U(x)< 0\ \mbox{for}\ x\neq 0,\\
&\lim_{|x|\to\infty}U(x) = -\infty
\end{split}\]
Assume moreover that each non zero critical point
  of $U$ is hyperbolic with Morse index $i_m\geq 1$. Then there is a
  nontrivial periodic orbit of (\ref{Newton}) on the energy level
  $\frac{1}{2}\vert\dot{u}\vert^2-U(u)=\alpha$ for each $\alpha>0$.
\end{proposition}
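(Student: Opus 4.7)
The plan is to reduce the proposition to Corollary~\ref{caseN-1} by the standard shift $\tilde U(x) := U(x)+\alpha$, which leaves Newton's equation unchanged ($U_x = \tilde U_x$) while transforming the prescribed energy level $\frac12|\dot u|^2-U(u)=\alpha$ into the zero energy level for $\tilde U$. I would define $\Omega$ as the connected component of $\{\tilde U>0\}=\{U>-\alpha\}$ containing the origin: this is well posed since $\tilde U(0)=\alpha>0$, the set $\Omega$ is bounded because $U(x)\to-\infty$ as $|x|\to\infty$, and the antipodal symmetry $\tilde U(-x)=\tilde U(x)$ descends from that of $U$. Hence $\mathbf H_s$ holds for the pair $(\tilde U,\Omega)$.

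Next I would verify $\mathbf H_1$ and the positive-diameter hypothesis needed by Corollary~\ref{caseN-1}. The critical points of $\tilde U$ on $\partial\Omega$ are exactly the critical points of $U$ at level $-\alpha<0$, hence nonzero, and the Hessian is unchanged by adding a constant, so by assumption they are hyperbolic. Thus $\mathbf H_1$ is immediate.

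The key point, and the place where the Morse index bound is genuinely used, is to show that every component $\Gamma_i$ of $\partial\Omega$ has positive diameter. Pick $p\in\partial\Omega$, so $U(p)=-\alpha$. If $\nabla U(p)\neq 0$, the implicit function theorem exhibits $\{U=-\alpha\}$ as a smooth $(n-1)$-dimensional hypersurface near $p$, and $p$ lies in a positive-dimensional piece of $\partial\Omega$. If $p$ is a critical point, hyperbolicity together with the assumption $i_m\geq 1$ forces the Morse index of $U_{xx}(p)$ to lie in $[1,n-1]$: index $0$ (strict local minimum of $U$) is excluded by $i_m\geq 1$, while index $n$ (strict local maximum) would give $U<-\alpha$ on a punctured neighborhood of $p$, so that $p\notin\overline\Omega$, a contradiction. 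Thus $p$ is a non-degenerate saddle, the level set $\{U=-\alpha\}$ is locally a singular $(n-1)$-dimensional cone through $p$, and so $p$ is again non-isolated in $\partial\Omega$. Therefore no component of $\partial\Omega$ can reduce to a singleton.

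With $\mathbf H_s$, $\mathbf H_1$ and the positive-diameter property all in hand, Corollary~\ref{caseN-1} applied to $(\tilde U,\Omega)$ yields an antipodally symmetric periodic solution $v^*:\R\to\Omega$ of $\ddot u=\tilde U_x(u)=U_x(u)$ on the zero energy level of $\tilde U$, i.e.\ on the level $\frac12|\dot u|^2-U(u)=\alpha$ of the original potential, oscillating between some $x_+\in\partial\Omega$ and $-x_+$ with $x_+\neq 0$; in particular it is nontrivial. The only delicate step is the positive-diameter verification at those $\alpha$ for which $-\alpha$ is a critical value of $U$, and this is precisely what the Morse-index lower bound $i_m\geq 1$ is designed to take care of.
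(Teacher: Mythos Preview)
Your proof is correct and follows the same strategy as the paper: shift to $\tilde U=U+\alpha$, take $\Omega$ the component of $\{\tilde U>0\}$ containing the origin, and invoke Corollary~\ref{caseN-1}. The only difference is emphasis: the paper asserts in one line that, thanks to the hypotheses on the critical points, $\partial\Omega$ is connected (so $N=1$) with finitely many critical points, whereas you do not claim connectedness but instead verify directly that every component of $\partial\Omega$ has positive diameter, which is all Corollary~\ref{caseN-1} actually needs. Your Morse-index argument (ruling out index $0$ by hypothesis and index $n$ because a strict local maximum cannot lie in $\overline\Omega$, leaving only saddles whose level set is a nontrivial cone) makes explicit precisely how the condition $i_m\ge 1$ enters, which the paper leaves implicit.
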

\begin{proof}
For each $\alpha>0$ we set $\tilde{U}=U(x)+\alpha$ and let
$\Omega\subset\{\tilde{U}>0\}$ be the connected component that contains
the origin. $\Omega$ is open, nonempty and bounded and, from the
assumptions on the properties of the critical points of $U$, it
follows that $\partial\Omega$ is connected and contains at most a
finite number of critical points. Therefore we are under the
assumptions of Corollary \ref{caseN-1} for the case $N=1$ and the
existence of the periodic orbit follows.
\end{proof}
An example of potential $U:\R^2\rightarrow\R$ that satisfies the
assumptions in Proposition \ref{allEnerg} is, in polar coordinates
$r,\theta$,
\[
U(r,\theta) = -r^2+\frac{1}{2}\tanh^4(r)
\cos^2(r^{-1})\cos^{2k}(2\theta),
\]
where $k>0$ is a sufficiently large number.

\begin{figure}[t!]
%\centerline{\epsfig{figure=esempio2.eps,width=8cm}}
%
\centerline{\includegraphics[width=8cm]{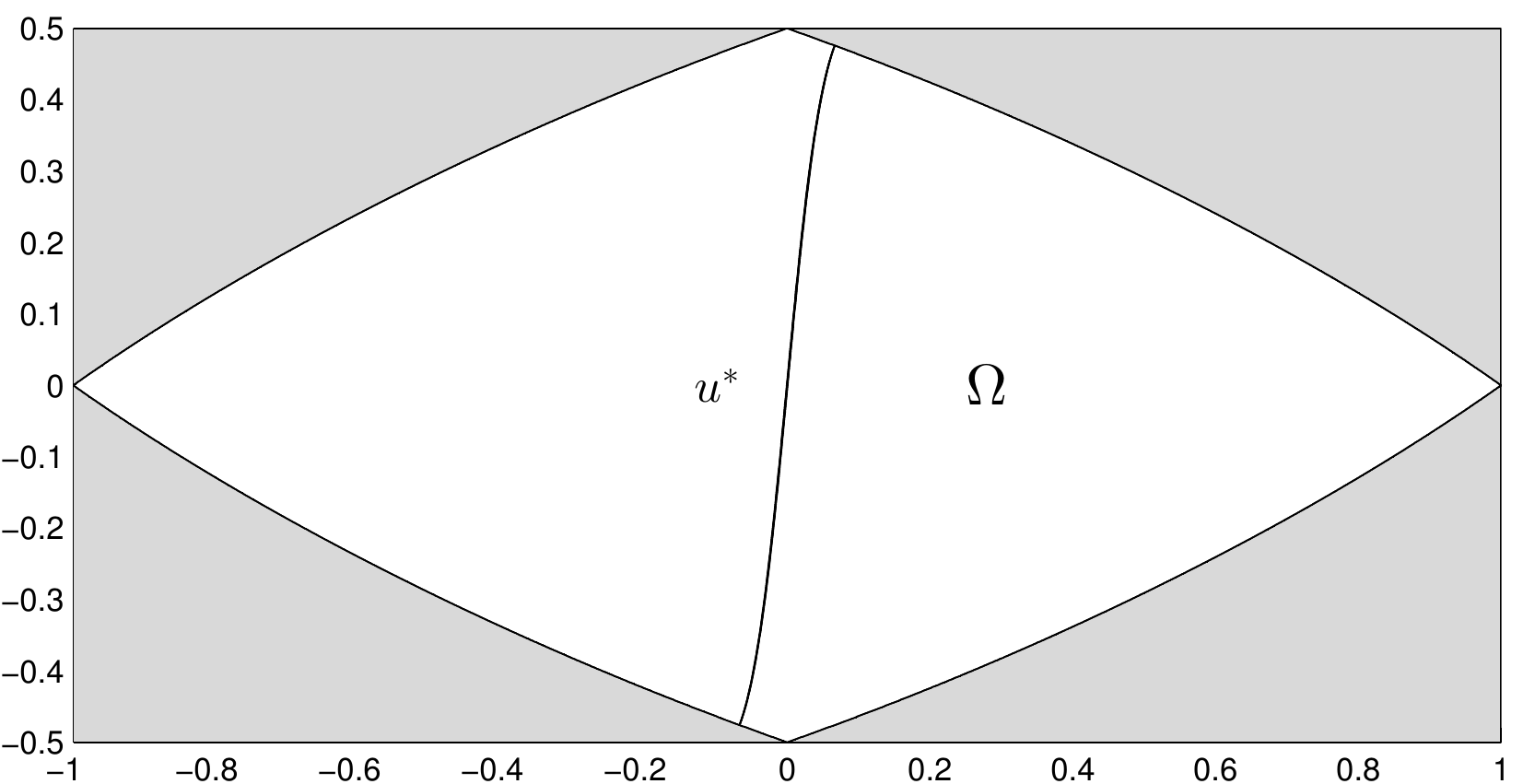}}
\caption{Symmetric periodic orbit for the example with potential (\ref{ex2}).}
%The shaded regions are not accessible. }
%\label{example}
\end{figure}

\smallskip
Next we give another application of Corollary \ref{caseN-1}.  For the
potential $U:\R^2\rightarrow\R$, with
\begin{equation}
U(x)=\frac{1}{2}(1-x_1^2)^2+\frac{1}{2}(1-4x_2^2)^2,
\label{ex2}
\end{equation}
the energy level
%$-\alpha=\frac{1}{2}$
 $\alpha=-\frac{1}{2}$
is critical and corresponds to
four hyperbolic critical points $p_1=(1,0)$, $-p_1$,
$p_2=(0,\frac{1}{2})$ and $-p_2$. The connected component
$\Omega\subset\{\tilde{U}>0\}$, ($\tilde{U}=U(x)-\frac{1}{2}$) that
contains the origin is bounded by a simple curve $\Gamma$ that
contains $\pm p_1$ and $\pm p_2$. In spite of the presence of these
critical points, from Theorem \ref{main-0} it follows that there is a
minimizer $u\in\mathcal{U}$, with $\mathcal{U}$ as in
(\ref{SymmU}) and $u(T^u)\in\Gamma\setminus\{\pm p_1,\pm p_2\}$, and
Corollary \ref{caseN-1} implies the existence of a periodic solution $v^*$.
Note that there are also two heteroclinic orbits, solutions of
(\ref{Newton}) and (\ref{EnergConserv}):
\[
u_1(t) = (\tanh(t),0),\qquad  u_2(t) = (0,\frac{1}{2}\tanh(2t)).
\]
These orbits connect $p_j$ to $-p_j$, for $j=1,2$. By
Theorem~\ref{main-0} both $u_1$ and $u_2$ have action greater than
$v^*|_{(-T_+,T_+)}$.

\begin{figure}[t!]
%\centerline{\epsfig{figure=periodic_3cc.eps,width=7.8cm}
%\epsfig{figure=homoclinic.eps,width=7.8cm}}
%\centerline{\epsfig{figure=periodic_2cc.eps,width=7.8cm}
%\epsfig{figure=heteroclinic.eps,width=7.8cm}}
\centerline{
\includegraphics[width=7.8cm]{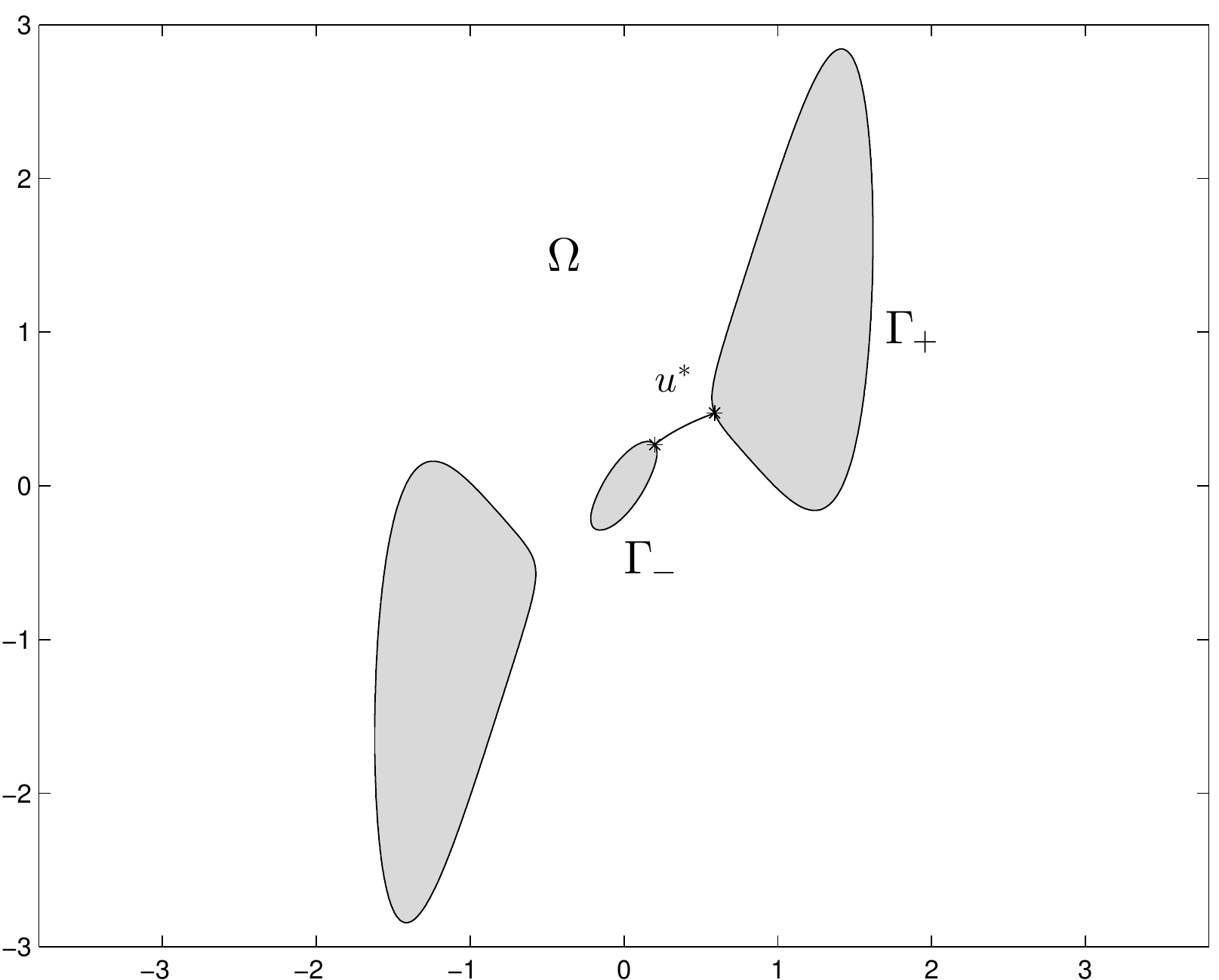}
\includegraphics[width=7.8cm]{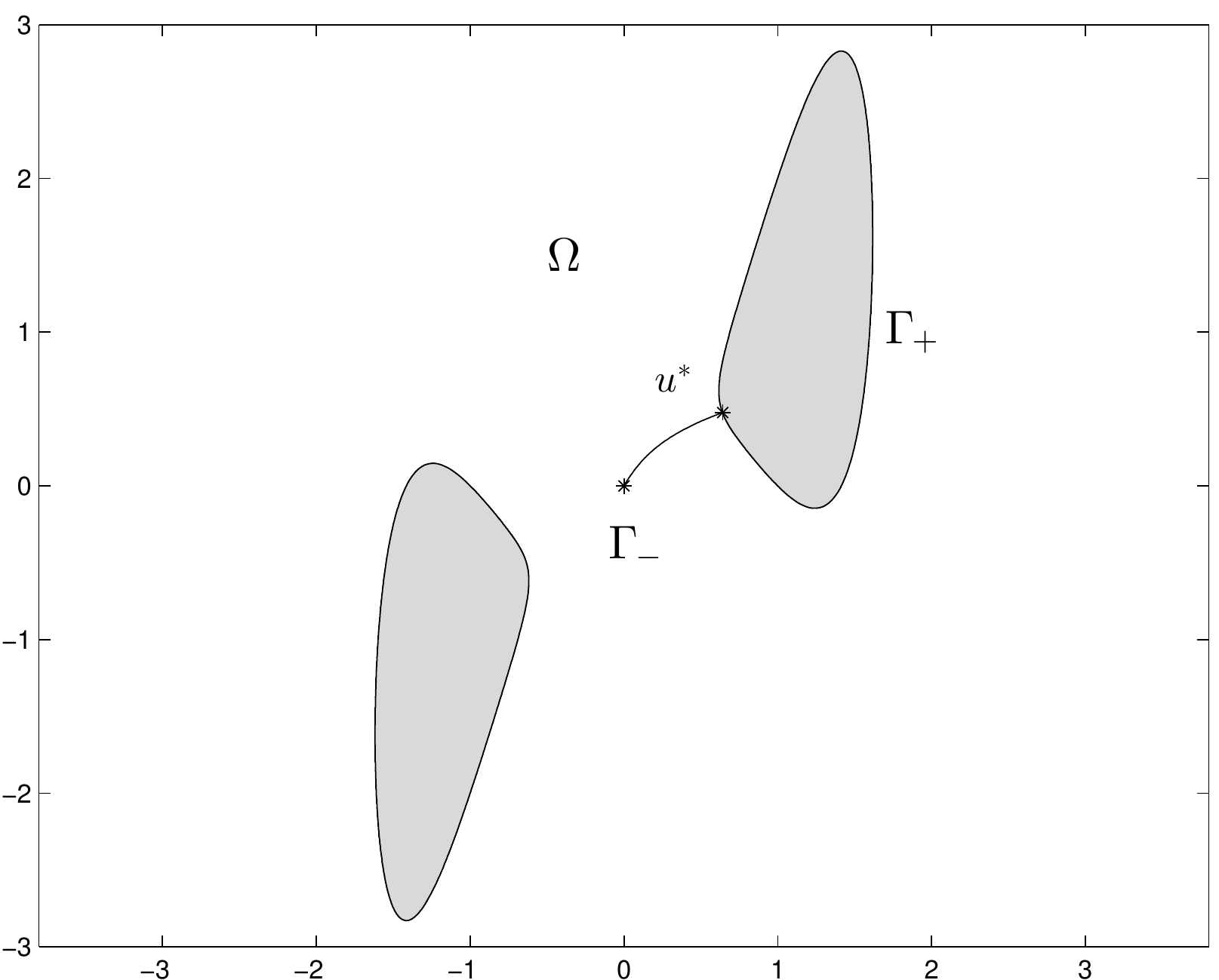}
}
\centerline{
\includegraphics[width=7.8cm]{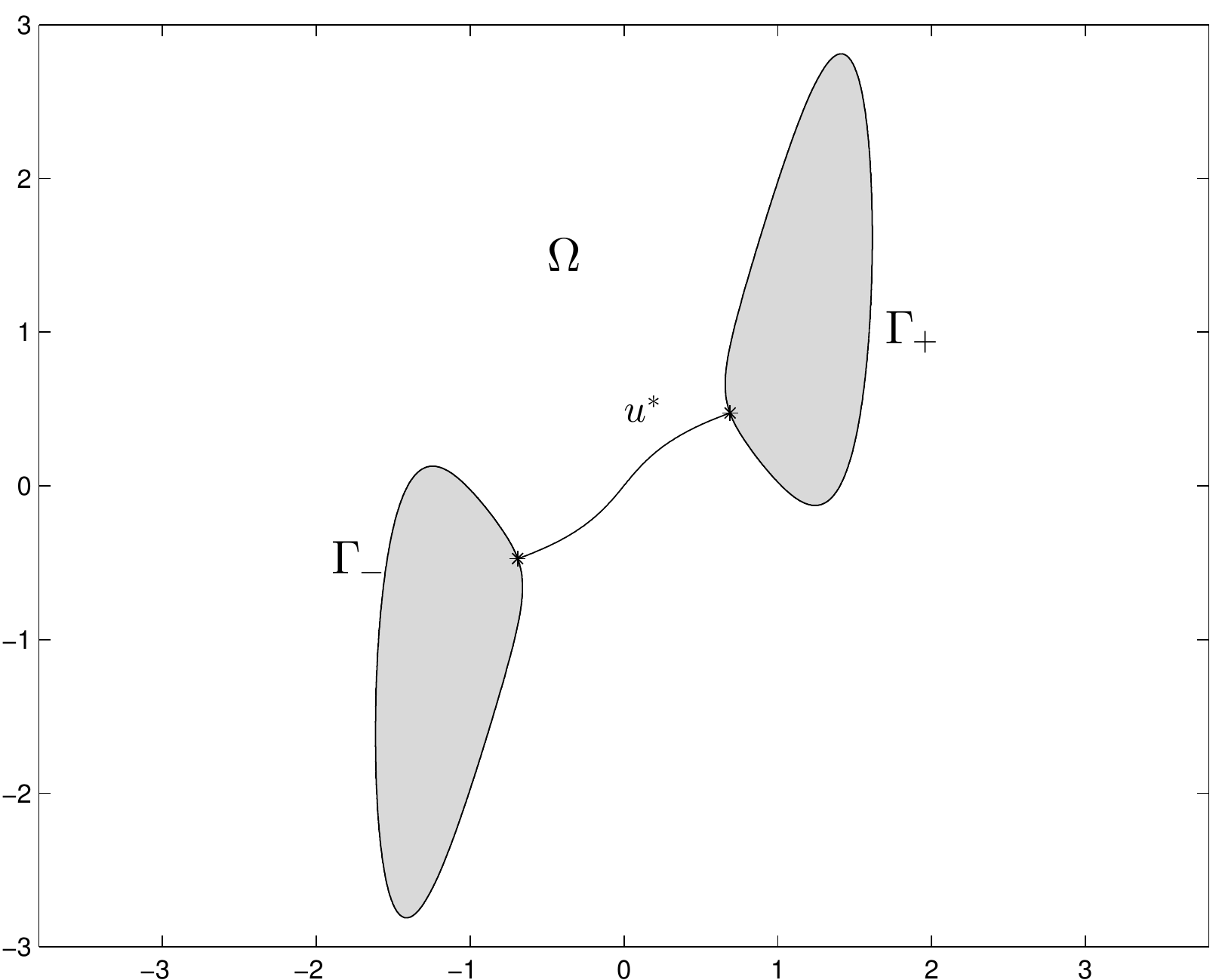}
\includegraphics[width=7.8cm]{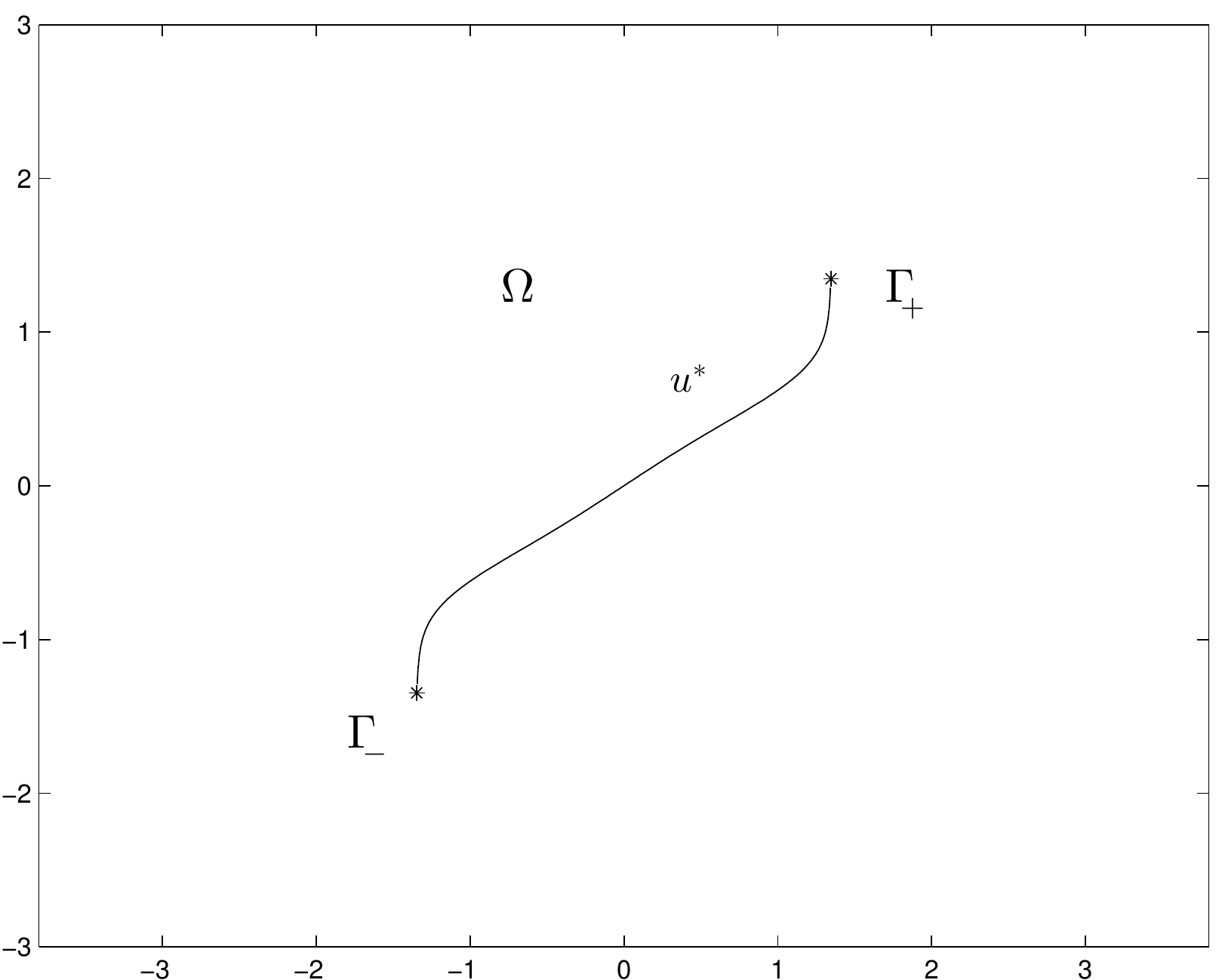}
}
\caption{Bifurcations of dynamics of (\ref{Newton}) with the
$\alpha=0$, bottom left: $\alpha=0.05$, bottom right: $\alpha
=-U(p_2,1)$. The shaded regions are not accessible.}
%$\alpha=-2.088662107903635$
%\label{example3}
\end{figure}

\smallskip
Our last example shows
that Theorems \ref{main0} and \ref{main-0} can be used to derive
information on the rich dynamics that (\ref{Newton}) can exhibit when
$U$ undergoes a small perturbation. We consider a family of potentials
$U:\R^2\times[0,1]\rightarrow\R$. We assume that $U(x,0)=x_1^6+x_2^2$
which from various points of view is a structurally unstable potential
and, for $\lambda>0$ small, we consider the perturbed potential
\begin{equation}
U(x,\lambda)=2\lambda^4x_1^2+x_2^2-2\lambda^2x_1x_2-3\lambda^2x_1^4+x_1^6.
\label{Uesempio2}
\end{equation}
%{\color{red}\cancel{$U(\cdot,\lambda)$}}
This potential satisfies $U(-x,\lambda)=U(x,\lambda)$ and, for
$\lambda>0$, has the five critical points $p_0, \pm p_1$
and $\pm p_2$ defined by
\begin{eqnarray*}%\label{CriP}
%\begin{split}
&p_0=(0,0),\\
&p_1 = (\lambda(1-(\frac{2}{3})^\frac{1}{2})^\frac{1}{2},
  \lambda^3(1-(\frac{2}{3})^\frac{1}{2})^\frac{1}{2}),\\
&p_2 = (\lambda(1+(\frac{2}{3})^\frac{1}{2})^\frac{1}{2},
  \lambda^3(1+(\frac{2}{3})^\frac{1}{2})^\frac{1}{2}),
%\end{split}
\end{eqnarray*}
which are all hyperbolic.

We have $U(p_2,\lambda)<0=U(p_0,\lambda)<U(p_1,\lambda)$ and $p_0$ is
a local minimum, $p_1$ a saddle and $p_2$ a global
minimum. Let $\alpha$ be the energy level. For
$-\alpha<U(p_2,\lambda)$ or $-\alpha\geq U(p_1,\lambda)$ no
information can be derived from Theorems \ref{main0} and \ref{main-0}
therefore we assume $-\alpha\in[U(p_2,\lambda),U(p_1,\lambda))$. For
  $-\alpha=U(p_2,\lambda)$ Corollary \ref{Hetero} or Corollary
  \ref{caseN-1}
%it follows
 yields the existence of a heteroclinic connection $u_2$
between $-p_2$ and $p_2$. For $-\alpha\in(U(p_2,\lambda),0)$ Corollary
\ref{caseN-1} implies the existence of a periodic orbit
$u_\alpha$. This periodic orbit converges uniformly in compact intervals to
$u_2$ and the period $T_\alpha\rightarrow+\infty$ as
$-\alpha\rightarrow U(p_2,\lambda)^+$. For $\alpha=0$ Corollary
\ref{Homo} implies the existence of two orbits $u_0$ and $-u_0$
homoclinic to $p_0=0$. We can assume that $u_0$ satisfies the
condition $u_0(-t)=u_0(t)$ and that $u_\alpha(0)=0$. Then we have that
$u_\alpha(\cdot\pm\frac{T_\alpha}{4})$ converges uniformly in compact intervals
to $\mp u_0$ and $T_\alpha\rightarrow+\infty$ as $-\alpha\rightarrow
0^-$. For $-\alpha\in(0,U(p_1,\lambda))$, $\partial\Omega$ is the
union of three simple curves all of positive diameter: $\Gamma_0$ that
includes the origin and $\pm\Gamma_2$ which includes $\pm p_2$ and
Corollary \ref{Per} together with the fact that $U(\cdot,\lambda)$ is
symmetric imply the existence of two periodic solutions
$\tilde{u}_\alpha$ and $-\tilde{u}_\alpha$ with $\tilde{u}_\alpha$
that oscillates between $\Gamma_0$ and $\Gamma_2$ in each time
interval equal to $\frac{T_\alpha}{2}$. Assuming that
$\tilde{u}_\alpha(0)\in\Gamma_2$ we have that, as $-\alpha\rightarrow
0^+$, $\tilde{u}_\alpha\rightarrow u_0$ uniformly in compacts and
$T_\alpha\rightarrow+\infty$. Finally we observe that, in the limit
$-\alpha\rightarrow U(p_1,\lambda)^-$, $\tilde{u}_\alpha$ converges
uniformly in $\R$ to the constant solution $u\equiv p_1$.
%============================================

\subsection*{Acknowledgements}
The first author is indebted with Peter Bates for fruitful discussions on
the subject of this paper.

\bibliographystyle{plain}

\end{document}